\newcommand{\bsmat}{\left[\begin{smallmatrix} }
\newcommand{\esmat}{\end{smallmatrix}\right] }
\newcommand{\tn}{\mathbb{T}_n}
\newcommand{\rnn}{\mathbb{R}^{n\times n}}
\newcommand{\cnn}{\mathbb{C}^{n\times n}}
\newcommand{\on}{\mathbb{O}_n}
\newcommand{\un}{\mathbb{U}_n}
\newcommand{\sn}{\mathbb{S}_n}
\newcommand{\hn}{\mathbb{H}_n}
\newcommand{\grn}{\mathbb{GL}(n,\mathbb{R})}
\newcommand{\gcn}{\mathbb{GL}(n,\mathbb{C})}
\newcommand{\an}{\mathbb{A}_n}
\newcommand{\wn}{\mathbb{W}_n}
\newcommand{\na}{\mathscr{N}(\mathcal{A})}
\newcommand{\nad}{\mathscr{N}(\mathcal{A};\delta)}
\DeclareMathOperator{\tr}{tr}
\DeclareMathOperator{\argmin}{argmin}
\DeclareMathOperator{\Bdiag}{Bdiag}
\DeclareMathOperator{\OffBdiag}{OffBdiag}
\DeclareMathOperator{\myvec}{vec}
\DeclareMathOperator{\reshape}{reshape}
\DeclareMathOperator{\sep}{sep}
\DeclareMathOperator{\SNR}{SNR}
\title{An Algebraic Approach to  Non-Orthogonal General Joint Block Diagonalization
\footnotemark[1]}
\author{
Yunfeng Cai
\footnotemark[2]
\and
Chengyu Liu
\footnotemark[3]
}
\begin{document}
\maketitle

\renewcommand{\thefootnote}{\fnsymbol{footnote}}

\footnotetext[1]{This research was supported by NSFC under grants 11301013, 11671023 and 11421101.}
\footnotetext[2]{LMAM \& School of Mathematical Sciences,
Peking University, Beijing, 100871, P.R. China, \email{yfcai@math.pku.edu.cn}}
\footnotetext[3]{LMAM \& School of Mathematical Sciences,
Peking University, Beijing, 100871, P.R. China, \email{liuchyu@pku.edu.cn}}

\renewcommand{\thefootnote}{\arabic{footnote}}

\slugger{mms}{xxxx}{xx}{x}{x--x}

\begin{abstract}
The exact/approximate non-orthogonal general joint block diagonalization ({\sc nogjbd}) problem
of a given real matrix set $\mathcal{A}=\{A_i\}_{i=1}^m$
is to find a nonsingular matrix $W\in\mathbb{R}^{n\times n}$ (diagonalizer) such that
$W^T A_i W$ for $i=1,2,\dots, m$ are all exactly/approximately block diagonal matrices
with the same diagonal block structure and with as many diagonal blocks as possible.
In this paper, we show that 
a solution to the exact/approximate {\sc nogjbd} problem can be obtained
by finding the exact/approximate solutions to the system of linear equations 
$A_iZ=Z^TA_i$ for $i=1,\dots, m$,
followed by a block diagonalization of $Z$ via similarity transformation.
A necessary and sufficient condition for the equivalence of the solutions to the exact  {\sc nogjbd} problem is established.
Two numerical methods are proposed to solve the {\sc nogjbd} problem,
and numerical examples are presented to show the merits of the proposed methods.
\end{abstract}

\vskip 2mm

\begin{keywords}
 joint block diagonalization, tensor decomposition, independent component analysis
\end{keywords}

\begin{AMS}
15A21,15A69, 65F30.
\end{AMS}

\pagestyle{myheadings}
\thispagestyle{plain}
\markboth{YUNFENG CAI AND CHENGYU LIU}{NON-ORTHOGONAL GENERAL JOINT BLOCK DIAGONALIZATION}

\section{Introduction}\label{sec:intro}
The joint block diagonalization problem, also called the simultaneous block diagonalization problem, is a particular case of the block term decomposition (BTD) of a third order tensor \cite{de2008decompositions, de2008decompositions2, de2008decompositions3, nion2011tensor}.
Such problem has found many applications in
independent subspace analysis (e.g., \cite{cardoso1998multidimensional, de2000fetal,  theis2005blind, theis2006towards}) and semidefinite programming (e.g., \cite{gatermann2004symmetry, de2007reduction, bai2009exploiting, de2010exploiting}).
To specify the problem, 
we name the problem by nine capital letters {\sc wwxyyzzzz}.
The first two letters, {\sc ww}, indicate the type of the matrices in the matrix set,
{\sc sy/he} for real symmetric/complex Hermitian matrices,
{\sc ge} for general matrices.
The second letter, {\sc x}, indicates that the problem is solved in the exact sense or the approximate sense,   {\sc e} for the former and {\sc a} for the latter.
The next two letters, {\sc yy}, indicate the type of the diagonalizer, {\sc no/nu} for non-orthogonal/non-unitary matrix, {\sc o/u} for orthogonal/unitary matrix(often left as blank).
The last four letters, {\sc zzzz}, indicate the computation performed,
{\sc jd} for joint diagonalization, {\sc jbd} for joint block diagonalization,
{\sc gjbd} for general joint block diagonalization.
Next, we first give some definitions, then formulate the {\sc wweyyjbd} problem and the {\sc wweyygjbd} problem mathematically.
\begin{definition}
We call $\tau_n=(n_1,\dots,n_t)$ a {\em partition} of positive integer $n$ if
$n_1,n_2,\dots,n_t$ are all positive integers and the sum of them is $n$, i.e., $\sum_{i=1}^t n_i=n$.
The integer $t$ is called the {\em cardinality} of the partition $\tau_n$,
denoted by $\card(\tau_n)$.
The set of all partitions of $n$ is denoted by $\mathbb{T}_n$.
\end{definition}

\begin{definition}
Given a partition $\tau_n=(n_1,\dots,n_t)$,
for any matrix $A$ of order $n$,
define its block diagonal part and off-block-diagonal part associated with $\tau_n$ as
  \begin{align*}
    \Bdiag_{\tau_n}(A)=\diag(A_{11},\dots,A_{tt}),\quad   \OffBdiag_{\tau_n}(A)=A-\Bdiag_{\tau_n}(A),
\end{align*}
respectively, where $A_{ii}$ is of order $n_i$ for $i=1,\dots, t$.
A matrix $A$ is referred to as a $\tau_n$-block diagonal matrix if $\OffBdiag_{\tau_n}(A)=0$.
\end{definition}

Let $\sn$, $\hn$, $\on$, $\un$, $\grn$ and $\gcn$
denote the $n\times n$ matrix set of real symmetric matrix, complex Hermitian matrix, real orthogonal matrix,
complex unitary matrix, real nonsingular matrix and complex nonsingular matrix, respectively. 
Let $\an=\sn$, $\hn$, $\rnn$, or $\cnn$,
$\wn=\on$, $\un$, $\grn$, or $\gcn$.
Then the {\sc wweyyjbd} problem and the {\sc wweyygjbd} problem can be stated as:

\noindent\textbf{The \textsc{wweyyjbd} problem.} \quad Given a matrix set $\{A_i\}_{i=1}^m$
with $A_i\in\an$,
and a partition $\tau_n=(n_1,\dots,n_t)$.
Find a matrix $W=W(\tau_n)\in\wn$ such that $W^{\star} A_i W$ for $i=1,\dots, m$
are all $\tau_n$-block diagonal matrices, i.e.,
\begin{equation}\label{eq:nojbd}
W^{\star} A_i W=\diag(A_{i}^{(11)},\dots, A_{i}^{(tt)}), \quad \mbox{for}\quad i=1,2,\dots, m,
\end{equation}
where $A_{i}^{(jj)}\in\mathbb{R}^{n_j\times n_j}$ for $j=1,2,\dots,t$.
Here the symbol $(\cdot)^{\star}$ stands for the transpose of a real matrix or the conjugate transpose of a complex matrix.

\medskip

\noindent\textbf{The  \textsc{wweyygjbd} problem.} \quad
Given a matrix set $\mathcal{A}=\{A_i\}_{i=1}^m$
with $A_i\in\an$.
Find a partition $\tau_n'=(n_1',\dots,n_t')$ and a matrix $W=W(\tau_n')\in\wn$
such that
\begin{equation*}
\card(\tau_n')=\max\{\card(\tau_n) \, \big|\,
\mbox{there exists a $W=W(\tau_n)$ which solves {\sc wweyyjbd}.}\}
\end{equation*}

In practice, the matrices $A_i$'s are usually constructed from empirical data, 
the {\sc wweyyjbd} problem in general has no solutions.
Consequently, the {\sc wwayyjbd}  problem is considered instead.
Naturally, the {\sc wwayyjbd}  problem is formulated as an optimization problem $C(W)=\min$, where $C(\cdot)$ is certain cost function, $W$ belongs to 
certain feasible set, say $\wn$.
In current literature, there are mainly three cost functions for the {\sc wwayyjbd} problem \cite{tichavsky2012algorithms}, namely, $C_{LS}(W)$ \cite{fevotte2007pivot}, $C_{LL}(W)$ \cite{lahat2012joint}, $C_{FIT}(Y)$($Y=W^{-1}$) \cite{nion2011tensor}, which can be respectively given by
\begin{align*}
C_{LS}(W)&=\frac{1}{2} \sum_{i=1}^m \|\OffBdiag_{\tau_n}(W^{\star}A_iW)\|_F^2,\\
C_{LL}(W)&= \frac{1}{2} \sum_{i=1}^m \log \frac{\det(\OffBdiag_{\tau_n}(W^{\star}A_iW))}{\det(W^{\star}A_iW)},\\
C_{FIT}(Y)&=\sum_{i=1}^m\|A_i-Y^{\star}D_iY\|_F^2,
\end{align*}
where $\tau_n\in\tn$ is a prescribed partition, 
$D_i=\argmin_{\OffBdiag_{\tau_n}(D)=0} \|A_i-Y^{\star}DY\|_F^2$ in $C_{FIT}$.

\medskip

Great efforts has been devoted to solving the {\sc wwayyjbd} problem and numerous algorithms are proposed.
For example,  
the JBD-OG/ORG  method by H. Ghennioui et al. \cite{ghennioui2010gradient}, 
the JBD-LM  method by O.~Cherrak et al. \cite{cherrak2013non},
the JBD-NCG method by D. Nion  \cite{nion2011tensor}.
For more methods, 
we refer the readers to \cite{de2009survey, chabriel2014joint, tichavsky2014non} and reference therein.
A very useful {\sc matlab} toolbox for tensor computation -- {\sc tensorlab} \cite{tensorlab}, 
which is available at http://www.tensorlab.net,
is also recommended for interested readers.

The {\sc wwayygjbd} problem, on the other hand,
attempts to maximize $\card(\tau_n)$ and minimize $C(W)$ at the same time,
which results in a rather difficult optimization problem.
In this paper, we formulate the {\sc wwayygjbd} problem as follows:

\medskip

\noindent\textbf{The \textsc{wwayygjbd} problem.} \quad
Given a matrix set $\mathcal{A}=\{A_i\}_{i=1}^m$
with $A_i\in\an$.
Find a partition $\tau_n'=(n_1',\dots,n_t')$ and a nonsingular matrix $W=W(\tau_n')\in\wn$
such that $(\tau_n', W)$ solves the following constrained optimization problem:
\begin{subequations}
\begin{align}
&\max_{\tau_n\in\mathbb{T}_n} & &\card(\tau_n)\label{opt}\\
&\mbox{subject to} & & \sum_{i=1}^m  \|\OffBdiag_{\tau_n}(W^{\star} A_i W)\|_F^2 \le \epsilon^2,\label{fcon}\\
&&&  \Bdiag_{\tau_n}(W^{\star}W)=I_n,\label{wtau}
\end{align}
\end{subequations}
where $\epsilon\ge 0$ is a prescribed parameter.

The first constraint \eqref{fcon} is used to control the norm of the off-block-diagonal parts of $W^{\star}A_i W$'s, where $\epsilon$ is a parameter.
In particular, if $\epsilon=0$, the {\sc wwayygjbd} problem becomes the {\sc wweyygjbd} problem.
The second constraint is used to prevent $W$  from becoming too small.
Notice that if $(\tau_n', W)$ solves the above {\sc wwayygjbd} problem, then so does $(\tau_n', WQ)$,
where $Q\in\on$ or $\un$ is a $\tau_n$-block diagonal matrix;
furthermore, the value on the left hand side of the first constraint remains unchanged for any $Q$.

The {\sc wwxyygjbd} problem is not well studied, in both theory and algorithm.
In current literature,  the {\sc wwayygjbd} problem, as an optimization problem,
is solved by a two stage procedure, in the first stage, apply a {\sc jd} algorithm;
in the second stage, reveal the block structure by permutation.
Such an approach is based on a conjecture \cite{abed2004algorithms} and is only partially proved \cite{theis2006towards}.
From a matrix $\ast$-algebraic point of view, the {\sc gexogjbd}/{\sc gexugjbd} problem
is studied in \cite{maehara2011algorithm,de2011numerical,murota2010numerical,maehara2010numerical}:
based on some structure theorem of the matrix $\ast$-algebra, 
the {\sc geeogjbd}/{\sc geeugjbd} problem is solved in theory, 
and several algorithms are proposed to solve the {\sc gexogjbd}/{\sc gexugjbd} problem.
From a matrix polynomial spectral approach, the {\sc hexnugjbd} problem is discussed in \cite{cai2015matrix}: based on the spectral decomposition of a matrix polynomial, 
the necessary and sufficient condition for the existence of nontrivial solutions are established,
the equivalence of the solutions are given,
and two algorithms are developed to solve the  {\sc hexnugjbd}.
Both the matrix $\ast$-algebra approach and the matrix polynomial approach are from algebraic point of view, and the numerical methods proposed are direct methods rather than iterative methods as in the optimization approach. 

In this paper, we study the {\sc gexnogjbd} problem 
\footnote{The {\sc gexnugjbd} problem can be solved in a similar way.}
via an algebraic approach, similar as in \cite{maehara2011algorithm}.
The tools we employed are only some fundamental matrix decompositions,
rather than the fancy structure theorem of matrix $\ast$-algebra.
Our contributions are fourfold.
First, we show that the existence of the solutions to the {\sc geenogjbd} problem 
is strongly connected with the  null space
\begin{align}\label{na}
\na:=\{Z\in\mathbb{R}^{n\times n}\; | \;
A_iZ=Z^TA_i, \; \mbox{for } i=1,\dots, m\}.
\end{align}
A solution to the {\sc geenogjbd} problem 
can be obtained from a matrix decomposition of a ``generic'' element in $\na$.
Second, a necessary and sufficient condition for the equivalence of the solutions are established.
Third, we show that the solutions to the {\sc geanogjbd} problem can be obtained from 
a decomposition of a ``generic'' element in the ``near-null'' space
\begin{align}\label{approxna}
\{Z\in\mathbb{R}^{n\times n}\; |\;A_iZ\approx Z^TA_i, \; \mbox{for } i=1,\dots,m \}.
\end{align}
Last,  two algorithms are proposed and numerical examples show their merits.

The rest of this paper is organized as follows.
In section~\ref{exact}, we give the existence and equivalence of the solutions to the {\sc geenogjbd} problem, and also show how to determine a solution.
In section~\ref{approx}, we show that the {\sc geanogjbd} problem can be solved in a similar way as the {\sc geenogjbd} problem, two numerical methods are proposed.
The numerical examples are given in  section~\ref{sec:numer}.
Finally, some concluding remarks are given in section~\ref{sec:conclusion}.

{\bf Notation.} The symbol $\otimes$ denotes the Kronecker product. 
The operation $\myvec(X)$ denotes the vectorization of the matrix $X$ formed by stacking the columns of $X$ into a single column vector.
The operation $\reshape(x,m,n)$ is to reshape the $mn$-by-1 vector $x$ into a $m$-by-$n$ matrix, e.g., $\reshape(\myvec(X),m,n)=X$.
The 2-norm, Frobinius norm and infinity norm of a matrix is denoted by $\|\cdot\|_2$, $\|\cdot\|_F$ and $\|\cdot\|_{\infty}$, respectively.
The eigenvalue set of a square matrix $A$ is denoted by $\lambda(A)$.
Let $a$ be a row vector of order $t$,
for convenience, by setting $a_j=(b_1,b_2)$,
we mean that the $j$th element of $a$ is replaced by $(b_1,b_2)$,
that is, $a=(a_1,\dots,a_{j-1},b_1,b_2,a_{j+1},\dots,a_t)$.
We shall also adopt MATLAB convention to access the entries of vectors and matrices. 
The set of integers from $i$ to $j$ inclusive is $i : j$. 
For a matrix $A$, its submatrices $A(k:\ell,i:j)$, $A(k:\ell,:)$, $A(:,i:j)$ consist of intersections of 
row $k$ to row $\ell$ and column $i$ to column $j$, 
row $k$ to row $\ell$ and all columns,
all rows and column $i$ to column $j$, respectively.

\section{On the \textsc{geenogjbd} problem}\label{exact}
In this section, we first discuss the existence of the solutions to {\sc geenogjbd} problem,
then the equivalence of the solutions, and finally show how to determine a solution.

For the ease of our following discussions, we need the following definitions.
\begin{definition}
Let $\tau_n$, $\tilde{\tau}_n\in\tn$ with $\card(\tau_n)=\card(\tilde{\tau}_n)=s$. We say that $\tau_n$ is {\em equivalent} to $\tilde{\tau}_n$
if there exists a permutation $\Pi_s$ such that $\tau_n=\tilde{\tau}_n\Pi_s$,
denoted by $\tau_n\sim \tilde{\tau}_n$.
\end{definition}

\begin{definition}
For any real matrix $Z$ of order $n$, denote its distinct eigenvalues by
$\lambda_1,\bar{\lambda}_1,\dots,\lambda_{\ell},\bar{\lambda}_{\ell},\lambda_{\ell+1},\dots,\lambda_{t}$,
where $\lambda_1,\dots,\lambda_{\ell}$ are non-real, $\lambda_{\ell+1},\dots,\lambda_t$ are real.
Let the algebraic multiplicity of $\lambda_i$ be $m_i$ for $i=1,\dots, t$.
Split the distinct eigenvalues into $s$ non-intersect subsets with each subset closed under complex conjugation,
and denote $n_i$ the sum of the algebraic multiplicities of the eigenvalues in each subset,
then an {\em eigenvalue partition} of $Z$ is defined as $\zeta_n(Z)=(n_1,\dots,n_s)$.
In particular, if $s=t$, such eigenvalue partition, hereafter called {\em optimal eigenvalue partition} 
 and denoted by $\zeta_n^{\opt}(Z)$, 
is unique up to a permutation $\Pi_t$, i.e.,
\[
\zeta_n^{\opt}(Z)=(n_1,\dots,n_t)\sim(2m_1,\dots, 2m_{\ell}, m_{\ell+1},\dots,m_t).
\]
The {\em eigenvalue decomposition} of $Z$ corresponding with a partition $\zeta_n(Z)$ is defined as
\begin{align}\label{eigdecomp}
Z=WGW^{-1}=W\diag(G_1,\dots,G_s)W^{-1},
\end{align}
where $G_j\in\mathbb{R}^{n_j\times n_j}$ for $j=1,\dots,s$,
$\lambda(G_j)\cap\lambda(G_k)=\emptyset$ for $j\ne k$,
$W\in\mathbb{R}^{n\times n}$ is nonsingular.
\end{definition}

\subsection{Existence of the solutions to the \textsc{geenogjbd} problem}
In this subsection, we establish the necessary and sufficient condition for the existence of solutions to 
the {\sc geenojbd} problem and {\sc geenogjbd} problem.

\begin{lemma}\label{lem1}
Given a partition $\tau_n=(n_1,\dots, n_s)\in\mathbb{T}_n$.
Then the {\sc geenojbd} problem has a solution $W$ if and only if there exists a matrix $Z\in\na$ such that
an eigenvalue partition of $Z$, denoted by $\zeta_n(Z)$, is equivalent to $\tau_n$.
\end{lemma}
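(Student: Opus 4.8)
The plan is to move between the two sides of the equivalence through a (real) eigenvalue decomposition, which turns the condition ``$Z\in\na$'' into a statement about the block structure of $W^TA_iW$ via a Sylvester equation. Throughout I would write $M_i:=W^TA_iW$, so that $A_i=W^{-T}M_iW^{-1}$ and the defining relation $A_iZ=Z^TA_i$ can be rewritten entirely in terms of $M_i$ and a transformed copy of $Z$.

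For the ``only if'' direction, start from a $W$ solving the {\sc geenojbd} problem for $\tau_n=(n_1,\dots,n_s)$, so $C_i:=W^TA_iW=\diag(C_i^{(1)},\dots,C_i^{(s)})$ with $C_i^{(j)}\in\mathbb{R}^{n_j\times n_j}$. I would pick pairwise distinct real numbers $d_1,\dots,d_s$, set $D=\diag(d_1I_{n_1},\dots,d_sI_{n_s})$, and take $Z:=WDW^{-1}$. Since $Z$ is similar to $D$, its distinct eigenvalues are $d_1,\dots,d_s$ with algebraic multiplicities $n_1,\dots,n_s$, so $\zeta_n^{\opt}(Z)=(n_1,\dots,n_s)\sim\tau_n$ is an eigenvalue partition of $Z$ equivalent to $\tau_n$. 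It then remains to check $Z\in\na$: from $A_i=W^{-T}C_iW^{-1}$ one gets $A_iZ=W^{-T}C_iDW^{-1}$ and $Z^TA_i=W^{-T}DC_iW^{-1}$ (using $D^T=D$), and $C_iD=DC_i$ holds because $D$ acts as the scalar $d_j$ on the $j$th diagonal block of the $\tau_n$-block-diagonal matrix $C_i$; hence $A_iZ=Z^TA_i$ for all $i$.

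For the ``if'' direction, let $Z\in\na$ have an eigenvalue partition $\zeta_n(Z)=(n_1,\dots,n_s)\sim\tau_n$, and take the corresponding eigenvalue decomposition $Z=WGW^{-1}$ as in \eqref{eigdecomp}, with $G=\diag(G_1,\dots,G_s)$, $G_j\in\mathbb{R}^{n_j\times n_j}$, $\lambda(G_j)\cap\lambda(G_k)=\emptyset$ for $j\ne k$, and $W$ real nonsingular. Substituting the decompositions of $Z$, $Z^T$ and $A_i$ into $A_iZ=Z^TA_i$ and simplifying gives $M_iG=G^TM_i$. Partitioning $M_i$ conformally with $\tau_n$ as $M_i=(M_i^{(jk)})$, the $(j,k)$ block of this identity is $G_j^TM_i^{(jk)}-M_i^{(jk)}G_k=0$; since $\lambda(G_j^T)=\lambda(G_j)$ is disjoint from $\lambda(G_k)$ for $j\ne k$, this Sylvester equation forces $M_i^{(jk)}=0$. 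Hence $W^TA_iW$ is $\tau_n$-block diagonal for each $i$, i.e.\ $W$ solves the {\sc geenojbd} problem.

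There is no deep obstacle here; the matrix algebra is routine, and the only external facts I would invoke are that a Sylvester equation $PX-XQ=0$ has only the trivial solution when $\lambda(P)\cap\lambda(Q)=\emptyset$, and that any conjugation-closed grouping of the spectrum of a real matrix $Z$ admits a real block-diagonalizing similarity as in \eqref{eigdecomp}. The one point to watch is that the lemma refers to \emph{an} eigenvalue partition, not necessarily the optimal one: in the forward direction I produce the optimal one by forcing the $d_j$ to be distinct, whereas in the backward direction I feed in exactly the decomposition attached to the given partition, whose diagonal blocks have pairwise-disjoint spectra by definition --- which is precisely the hypothesis that makes the Sylvester step work.
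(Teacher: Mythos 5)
Your proof is correct and follows essentially the same route as the paper: the forward direction builds $Z=WDW^{-1}$ with a block-scalar $D$ having distinct values on the blocks (the paper uses $d_j=j$), and the backward direction substitutes the eigenvalue decomposition into $A_iZ=Z^TA_i$ and kills the off-diagonal blocks of $W^TA_iW$ via the Sylvester equation $G_j^TM_i^{(jk)}-M_i^{(jk)}G_k=0$ with disjoint spectra. No substantive differences.
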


\begin{proof}
$(\Rightarrow)$ (Sufficiency) If $W$ is a solution to the {\sc geenojbd} problem,
then \eqref{eq:nojbd} holds for $i=1,\dots,m$.
Let
\[
Z=W\diag(G_1,G_2,\dots,G_s)W^{-1}
=W\diag(I_{n_1},2I_{n_2}\dots, s I_{n_s})W^{-1},
\]
 it is easy to see that $Z\in\na$ and
$\zeta_n^{\opt}(Z)$, which is an eigenvalue partition of $Z$, is equivalent to $\tau_n$.

$(\Leftarrow)$ (Necessity) Using the assumption that $Z$ has an eigenvalue partition $\zeta_n(Z)$
that is equivalent to $\tau_n$,  we know that $Z$ has the following eigenvalue decomposition 
corresponding with $\tau_n$:
\begin{align}\label{eq:zwgw}
Z=W\diag(G_1,\dots,G_s)W^{-1},
\end{align}
where $G_j\in\mathbb{R}^{n_j\times n_j}$ for $j=1,\dots,s$,
and $\lambda(G_j)\cap\lambda(G_k)=\emptyset$ for $j\ne k$.
Substituting \eqref{eq:zwgw} into $A_iZ=Z^T A_i$, we get
\begin{align}\label{eq:vgva}
A_i {W}\diag(G_{1},\dots, G_s){W}^{-1}={W}^{- T}\diag(G_{1}^T,\dots, G_s^T){W}^T A_i.
\end{align}
Partition $W^TA_iW=[A_i^{(jk)}]$ with $A_i^{(jk)}\in\mathbb{R}^{n_j\times n_k}$, then
it follows from \eqref{eq:vgva} that
\begin{align}
A_i^{(jk)}G_k=G_j^T A_i^{(jk)}, \qquad \mbox{for } \;  i=1,2,\dots, m, \quad j,k=1,2,\dots,s.
\end{align}
Consequently, for $j\ne k$, we know that $A_i^{(jk)}=0$ since $\lambda(G_j)\cap\lambda(G_k)=\emptyset$.
The conclusion follows.
\qquad\end{proof}

\begin{theorem}\label{thm1}
The {\sc geenogjbd} problem has a solution  $(\tau_n, W)$ if and only if
there exists a matrix $Z\in\na$ which has an eigenvalue partition $\zeta_n(Z)$
that is equivalent to $\tau_n$,
and there is no $\widetilde{Z}\in\na$ such that $\card(\zeta_n^{\opt}({\widetilde{Z}}))>\card(\tau_n)$.
\end{theorem}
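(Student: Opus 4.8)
The plan is to obtain Theorem~\ref{thm1} as an essentially immediate consequence of Lemma~\ref{lem1}, once one adds the elementary spectral observation that, among all eigenvalue partitions of a fixed real matrix $Z$, the optimal one $\zeta_n^{\opt}(Z)$ has the largest cardinality and is itself always attained. Indeed, by the definition of an eigenvalue partition, any $\zeta_n(Z)$ is obtained by grouping the conjugation-closed spectral atoms of $Z$ (a conjugate pair of non-real eigenvalues, or a single real eigenvalue) into possibly fewer blocks, so $\card(\zeta_n(Z))\le\card(\zeta_n^{\opt}(Z))$, with equality realized by $\zeta_n^{\opt}(Z)$ itself. Thus maximizing $\card(\tau_n)$ over all partitions for which the {\sc geenojbd} problem is solvable is, via Lemma~\ref{lem1}, the same as maximizing $\card(\zeta_n^{\opt}(Z))$ over $Z\in\na$; this equivalence is the conceptual core of the argument.

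For the forward implication, suppose $(\tau_n,W)$ solves the {\sc geenogjbd} problem. Then in particular $W=W(\tau_n)$ solves the {\sc geenojbd} problem for $\tau_n$, so Lemma~\ref{lem1} yields a $Z\in\na$ whose eigenvalue partition $\zeta_n(Z)$ is equivalent to $\tau_n$; this is the first required assertion. If, moreover, there existed a $\widetilde{Z}\in\na$ with $\card(\zeta_n^{\opt}(\widetilde{Z}))>\card(\tau_n)$, then applying Lemma~\ref{lem1} to the partition $\zeta_n^{\opt}(\widetilde{Z})$ (which is an eigenvalue partition of $\widetilde{Z}$, hence trivially equivalent to itself) would make the {\sc geenojbd} problem solvable for a partition of cardinality strictly larger than $\card(\tau_n)$, contradicting the maximality built into the definition of the {\sc geenogjbd} problem. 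Hence no such $\widetilde{Z}$ exists.

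For the converse, assume there is a $Z\in\na$ with $\zeta_n(Z)\sim\tau_n$ and that no $\widetilde{Z}\in\na$ satisfies $\card(\zeta_n^{\opt}(\widetilde{Z}))>\card(\tau_n)$. By Lemma~\ref{lem1} the {\sc geenojbd} problem has a solution $W=W(\tau_n)$, so it only remains to verify that $\card(\tau_n)$ is maximal over all {\sc geenojbd}-solvable partitions. If some $\tilde{\tau}_n$ with $\card(\tilde{\tau}_n)>\card(\tau_n)$ admitted a {\sc geenojbd} solution, Lemma~\ref{lem1} would furnish a $\widetilde{Z}\in\na$ with $\zeta_n(\widetilde{Z})\sim\tilde{\tau}_n$, and then the monotonicity recorded above gives $\card(\zeta_n^{\opt}(\widetilde{Z}))\ge\card(\zeta_n(\widetilde{Z}))=\card(\tilde{\tau}_n)>\card(\tau_n)$, contradicting the hypothesis. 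Therefore $(\tau_n,W)$ solves the {\sc geenogjbd} problem.

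The proof is short, and I do not anticipate a serious technical obstacle; the one point requiring care, and the step I would flag as the crux, is precisely the passage between the optimization-flavored notion ``$\card(\tau_n)$ is maximal among {\sc geenojbd}-solvable partitions'' and the purely spectral quantity ``$\max_{\widetilde{Z}\in\na}\card(\zeta_n^{\opt}(\widetilde{Z}))$.'' This passage uses both directions of Lemma~\ref{lem1}—that every solvable partition arises from some element of $\na$, and that every eigenvalue partition of an element of $\na$ gives a solvable partition—together with the fact that $\zeta_n^{\opt}(Z)$ is the finest (largest-cardinality) eigenvalue partition of $Z$ and is itself always realizable.
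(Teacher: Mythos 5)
Your proposal is correct and follows essentially the same route as the paper: both directions reduce to Lemma~\ref{lem1} together with the observation that $\card(\zeta_n(Z))\le\card(\zeta_n^{\opt}(Z))$ for every eigenvalue partition of $Z$. You merely make that monotonicity fact more explicit than the paper does, which is a welcome clarification rather than a departure.
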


\begin{proof}
$(\Rightarrow)$ (Sufficiency) If $(\tau_n,W)$ is a solution to the {\sc geenogjbd} problem,
then $W$ is a solution to the {\sc geenojbd} problem.
By Lemma~\ref{lem1}, we know that
there exists a matrix $Z\in\na$
which has an eigenvalue partition $\zeta_n(Z)$ that is equivalent to $\tau_n$.

If there exists a $\widetilde{Z}\in\na$ such that $\card(\zeta_n^{\opt}(\widetilde{Z}))>\card(\tau_n)$,
then $\widetilde{Z}$ has an eigenvalue decomposition
\begin{align}\label{eq:zwgw2}
\widetilde{Z}=\widetilde{W}\diag(\widetilde{G}_1,\dots,\widetilde{G}_{\tilde{t}})\widetilde{W}^{-1},
\end{align}
where $\tilde{t}>\card(\tau_n)$, $\widetilde{G}_j$ is of order $\tilde{n}_j$,
and $\lambda(\widetilde{G}_j)\cap\lambda(\widetilde{G}_k)=\emptyset$.
By Lemma~\ref{lem1}, for the partition $\tilde{\tau}_n=(\tilde{n}_1,\dots,\tilde{n}_{\tilde{t}})$,
the {\sc geenojbd} problem has a solution $\widetilde{W}$.
Therefore, for any solution of the {\sc geenogjbd} problem,
the cardinality of the partition should be no less than $\tilde{t}$,
which contradicts with the fact that $(\tau_n, W)$ is a solution to the {\sc geenogjbd} problem
and $\card(\tau_n)<\tilde{t}$.

$(\Leftarrow)$ (Necessity)
If there exists a matrix $Z\in\na$ which has an eigenvalue partition $\zeta_n(Z)$
that is equivalent to $\tau_n$,
then by Lemma~\ref{lem1},
for the partition $\tau_n$, the {\sc geenojbd} problem has a solution $W$.
If there is no $\widetilde{Z}\in\na$ such that $\card(\zeta_n^{\opt}({\widetilde{Z}}))>\card(\tau_n)$,
we declare that $(\tau_n,W)$ is a solution to the {\sc geenogjbd} problem.
Because otherwise, let $(\tilde{\tau}_n,\widetilde{W})$ be a solution to the {\sc geenogjbd} problem, then $\card(\tilde{\tau}_n)>\card(\tau_n)$.
In another word, for the partition $\tilde{\tau}_n$, the {\sc geenojbd} problem has a solution $\widetilde{W}$.
By Lemma~\ref{lem1}, there exists a matrix $\widetilde{Z}\in\na$
such that \eqref{eq:zwgw2} holds.
Then it follows that $\card(\zeta_n^{\opt}(\widetilde{Z}))\ge \tilde{t}=\card(\tilde{\tau}_n)>\card(\tau_n)$,
which is a contradiction.
\qquad\end{proof}

\begin{remark}\label{rem2}
Given a matrix $Z\in\na$,
if $Z$ has an eigenvalue partition $\zeta_n(Z)=(n_1,\dots, n_s)$,
it has an eigenvalue decomposition corresponding with $\zeta_n(Z)$, i.e.,
$Z=W\diag(G_1,\dots, G_s)W^{-1}$ with $G_j\in\mathbb{R}^{n_j\times n_j}$
and $\lambda(G_j)\cap\lambda(G_k)=\emptyset$.
Then for the partition $(n_1,\dots, n_s)$, the {\sc geenojbd} problem has a solution $W$.
Notice that $s\le\card(\zeta_n^{\opt}(Z))$, and the equality holds if and only if
the eigenvalues of each $G_j$ are the same real number or a complex conjugate pair.
Therefore, if $(\zeta_n(Z),W)$ is a solution to the {\sc geenogjbd} problem,
then $\zeta_n(Z)\sim\zeta_n^{\opt}(Z)$,
which implies that the eigenvalues of each $G_j$ are the same real number or a complex conjugate pair.
\end{remark}

\begin{remark}\label{rem3}
If the  {\sc geenogjbd} problem  of matrix set $\mathcal{A}_+=\{I_n\}\cup\mathcal{A}$ 
has a solution $(\tau_n, W)$,
then $(\tau_n, W(W^TW)^{-\frac{1}{2}})$ is a solution to 
the {\sc geeogjbd} problem of matrix set $\mathcal{A}$.
Therefore, the solutions to the {\sc geeogjbd} problem of $\mathcal{A}$ can be obtained by solving the {\sc geenogjbd} problem of $\mathcal{A}_+$.
To be specific, the null space of the matrix set $\mathcal{A}_+$ can be given by
\begin{align*}
\mathscr{N}({\mathcal{A}_+})
&=\{Z\in\mathbb{R}^{n\times n}\; | \; Z^T=Z,  A_iZ=Z^TA_i, \; \mbox{for } i=1,\dots, m\}\\
&=\{Z\in\mathbb{R}^{n\times n}\; | \; Z^T=Z, A_iZ=ZA_i, \; \mbox{for } i=1,\dots, m\},
\end{align*}
which is the {\em commutant algebra} of the matrix $\ast$-subalgebra generated by $\mathcal{A}_+$ \cite{maehara2011algorithm}.
For a generic $Z\in\mathscr{N}({\mathcal{A}_+})$,  
let $Z=QTQ^T$ be
its spectral decomposition (also its eigenvalue decomposition corresponding with $\zeta_n^{\opt}(Z)$),
where $Q$ is orthogonal, $T=\diag(\lambda_1 I_{n_1},\dots, \lambda_tI_{n_t})$
with $\lambda_i\ne\lambda_j$ for $i\ne j$.
Then according to Proposition~3.1 in \cite{maehara2011algorithm},
$(\tau_n, Q)$ solves the {\sc geeogjbd} problem of $\mathcal{A}$,
which agrees with Theorem~\ref{thm1} here.
\end{remark}

\subsection{Equivalence of the solutions}

If $(\tau_n,W)$ with $\tau_n = (n_1, \dots, n_t)$ is a solution to the {\sc geenogjbd} problem,
then so is $(\hat{\tau}_n,\widehat{W})=(\tau_n\Pi_t, WT\Pi)$,
where $\Pi_t$ is a permutation matrix of order $t$,
$\Pi\in\mathbb{R}^{n\times n}$ is permutation matrix,
which can be obtained by replacing the 1 and 0 elements in $j$th row of $\Pi_t$ by
$I_{n_j}$ and zero matrices of right sizes, respectively 
(hereafter such permutation matrix $\Pi$ is referred to as the {\em block permutation matrix corresponding with $\tau_n$}),
and $T = \diag(T_{jj})$ is a nonsingular $\tau_n$-block diagonal matrix.
We write $(\tau_n,W)\sim(\hat{\tau}_n,\widehat{W})$ if $(\hat{\tau}_n,\widehat{W})=(\tau_n\Pi_t, WT\Pi)$.
Notice that the relation $\sim$ is  reflexive, symmetric, and transitive,
i.e., it is an equivalence relation.
Consequently, we may say that $(\tau_n,W)$ and $(\hat{\tau}_n,\widehat{W})$
are {\em equivalent}.
A fundamental problem is: are all solutions to the {\sc geenogjbd} problem equivalent?
The following theorem gives the answer.

\begin{theorem}\label{thm:uniq}
Suppose that the {\sc geenogjbd} problem has a solution $(\tau_n, W)$,
where $\tau_n=(n_1,\dots, n_t)\in\mathbb{T}_n$, $W$ satisfies \eqref{eq:nojbd}.
For $j=1,\dots,t$, let $\mathcal{A}_j=\{A_i^{(jj)}\}_{i=1}^m$.
The following statements are equivalent:
\begin{enumerate}
\item[(1)]
All solutions to the {\sc geenogjbd} problem are equivalent.
\item[(2)]
The dimension of $\na$ equals to the sum of the dimension of
$\mathscr{N}(\mathcal{A}_j)$, i.e.,
\begin{align}\label{dimna}
\dim\na=\sum_{j=1}^t\dim\mathscr{N}(\mathcal{A}_j).
\end{align}
\item[(3)]
For any $1\le j< k \le t$, the matrix
\begin{align}\label{mjk}
M_{jk}=\sum_{i=1}^t\bsmat I_{n_k}\otimes[A_i^{(jj)^T} A_i^{(jj)} +A_i^{(jj)} A_i^{(jj)^T}] &
A_i^{(kk)}\otimes A_i^{(jj)} +A_i^{(kk)^T}\otimes A_i^{(jj)^T}\\
A_i^{(kk)}\otimes A_i^{(jj)} +A_i^{(kk)^T}\otimes A_i^{(jj)^T} &
[A_i^{(kk)^T} A_i^{(kk)} +A_i^{(kk)} A_i^{(kk)^T}] \otimes I_{n_j}
\esmat
\end{align}
is nonsingular.
\end{enumerate}
\end{theorem}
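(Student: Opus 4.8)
The plan is to put the problem in a normal form, deduce $(2)\Leftrightarrow(3)$ from a direct‑sum decomposition of $\na$, and then prove $(1)\Leftrightarrow(2)$ by two arguments running in opposite directions. First I would normalize: since $W$ satisfies \eqref{eq:nojbd}, set $B_i=W^TA_iW=\diag(A_i^{(11)},\dots,A_i^{(tt)})$. The map $Z\mapsto W^{-1}ZW$ is a linear isomorphism of $\na$ onto $\mathscr{N}(\{B_i\}_{i=1}^m)$ that fixes the diagonal blocks $A_i^{(jj)}$, hence fixes the spaces $\mathscr{N}(\mathcal{A}_j)$, and $V\mapsto W^{-1}V$ identifies the solution sets and the equivalence relations of the {\sc geenogjbd} problems for $\{A_i\}$ and for $\{B_i\}$. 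Thus none of (1), (2), (3) changes, and I may assume from now on that every $A_i$ is $\tau_n$-block diagonal and $(\tau_n,I_n)$ is a solution.

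Next I would analyze $\na$ blockwise. For $Z=[Z^{(jk)}]\in\na$ partitioned conformally with $\tau_n$, comparing $(j,k)$ blocks in $A_iZ=Z^TA_i$ and using that the $A_i$ are block diagonal gives $A_i^{(jj)}Z^{(jk)}=(Z^{(kj)})^TA_i^{(kk)}$ for all $i,j,k$. For $j=k$ this says $Z^{(jj)}\in\mathscr{N}(\mathcal{A}_j)$; for $j<k$, writing $U=(Z^{(kj)})^T$, it becomes the pair $A_i^{(jj)}Z^{(jk)}=UA_i^{(kk)}$ and $(A_i^{(jj)})^TZ^{(jk)}=U(A_i^{(kk)})^T$, whose solution space in $(Z^{(jk)},U)$ I call $\mathcal{S}_{jk}$. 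Since the diagonal blocks and the off-diagonal pairs occupy disjoint block positions, $\na$ is the internal direct sum of the diagonally embedded $\mathscr{N}(\mathcal{A}_j)$, $j=1,\dots,t$, and the embedded $\mathcal{S}_{jk}$, $1\le j<k\le t$, so $\dim\na=\sum_j\dim\mathscr{N}(\mathcal{A}_j)+\sum_{j<k}\dim\mathcal{S}_{jk}$ and \eqref{dimna} holds iff every $\mathcal{S}_{jk}=\{0\}$. Vectorizing the two equations that cut out $\mathcal{S}_{jk}$ and stacking over $i$ yields a real matrix $N$ with $\ker N\cong\mathcal{S}_{jk}$, and a routine Kronecker-product computation (using $(X\otimes Y)(P\otimes Q)=XP\otimes YQ$) identifies $N^TN$ with the matrix $M_{jk}$ of \eqref{mjk}, up to a harmless $\diag(I,-I)$-congruence; hence $\mathcal{S}_{jk}=\{0\}$ iff $M_{jk}$ is nonsingular, which proves $(2)\Leftrightarrow(3)$.

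For $(2)\Rightarrow(1)$ I would take an arbitrary solution $(\tilde\tau_n,V)$ with $\tilde\tau_n=(\tilde n_1,\dots,\tilde n_t)$ (of cardinality $t$ by maximality of $t$), and form $Z=V\diag(I_{\tilde n_1},2I_{\tilde n_2},\dots,tI_{\tilde n_t})V^{-1}$, which lies in $\na$ exactly as in the proof of Lemma~\ref{lem1}. By (2) it is $\tau_n$-block diagonal, $Z=\diag(Z^{(11)},\dots,Z^{(tt)})$ with $Z^{(jj)}\in\mathscr{N}(\mathcal{A}_j)$; being $V\diag(\cdots)V^{-1}$ it is diagonalizable with $\lambda(Z)=\{1,\dots,t\}$, so each $Z^{(jj)}$ is diagonalizable with eigenvalues in $\{1,\dots,t\}$. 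If some $Z^{(jj)}$ had two distinct eigenvalues, then by Lemma~\ref{lem1} applied to $\mathcal{A}_j$ the {\sc geenojbd} problem of $\mathcal{A}_j$, hence of $\{A_i\}$, would admit a partition of cardinality $\ge t+1$, contradicting maximality; therefore each $Z^{(jj)}=\mu_jI_{n_j}$ with $(\mu_1,\dots,\mu_t)$ a permutation of $(1,\dots,t)$, and matching the eigenspaces of $\diag(\mu_1I_{n_1},\dots,\mu_tI_{n_t})$ with those of $V\diag(\cdots)V^{-1}$ forces $\tilde\tau_n=\tau_n\Pi_t$ and $V=T\Pi$ for a nonsingular $\tau_n$-block diagonal $T$ and a block permutation $\Pi$, i.e. $(\tilde\tau_n,V)\sim(\tau_n,I_n)$. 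For the converse $\neg(2)\Rightarrow\neg(1)$, I would pick $j<k$ with $\mathcal{S}_{jk}\ne\{0\}$, a corresponding nonzero $Z_0\in\na$ supported on the block positions $(j,k)$ and $(k,j)$, distinct reals $c_1,\dots,c_t$, and $Z_\varepsilon=\diag(c_1I_{n_1},\dots,c_tI_{n_t})+\varepsilon Z_0\in\na$, which is not $\tau_n$-block diagonal. By continuity of the spectrum, for small $\varepsilon$ the eigenvalues of $Z_\varepsilon$ form $t$ conjugation-closed clusters, the $\ell$th one near $c_\ell$ of total multiplicity $n_\ell$, so $\card(\zeta_n^{\opt}(Z_\varepsilon))\ge t$; combined with $\card(\zeta_n^{\opt}(Z_\varepsilon))\le t$ (from maximality) this gives equality, hence $\zeta_n^{\opt}(Z_\varepsilon)\sim\tau_n$, and the eigenvalue decomposition $Z_\varepsilon=W'\diag(G'_1,\dots,G'_t)(W')^{-1}$ makes $(\tau_n,W')$ a {\sc geenogjbd} solution by Lemma~\ref{lem1}. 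It cannot be equivalent to $(\tau_n,I_n)$: $W'=T\Pi$ would make $(W')^{-1}Z_\varepsilon W'=\Pi^{-1}(T^{-1}Z_\varepsilon T)\Pi$ a $\tau_n$-block diagonal matrix, which is impossible because $T^{-1}Z_\varepsilon T$ still carries a nonzero off-diagonal block that no block permutation can move onto the block diagonal.

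The step I expect to demand the most care is this last construction — precisely, certifying that the perturbed $Z_\varepsilon$ has \emph{exactly} $t$ eigenvalue classes, so that it genuinely yields a {\sc geenogjbd} solution; this is the one place where maximality of $t$ is used nontrivially, to upgrade the easy lower bound $\card(\zeta_n^{\opt}(Z_\varepsilon))\ge t$ (from continuity of the spectrum) to an equality. By comparison, the dimension count and the recognition of $M_{jk}$ as a Gram matrix $N^TN$ are routine.
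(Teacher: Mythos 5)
Your proposal is correct and follows essentially the same route as the paper: the $(2)\Leftrightarrow(3)$ part via the blockwise direct-sum decomposition of $\na$ and the Gram matrix $\widehat{M}_{jk}^T\widehat{M}_{jk}=M_{jk}$ is the paper's argument verbatim, and both directions of $(1)\Leftrightarrow(2)$ rest on the same two ingredients the paper uses — a small perturbation inside $\na$ whose optimal eigenvalue partition is pinned to cardinality $t$ by the maximality of $t$ (Theorem~\ref{thm1}), and eigenspace matching of two eigenvalue decompositions of the same element of $\na$. The only cosmetic differences are your normalization to $W=I_n$ and your choice to prove $(1)\Rightarrow(2)$ in contrapositive form with a perturbation supported on a single off-diagonal block pair, versus the paper's direct argument for an arbitrary $\widehat{Z}\in\na$; neither changes the substance.
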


\begin{proof}
We proceed by showing $(1)\Rightarrow(2)\Rightarrow(1)$ and $(2)\Leftrightarrow(3)$.

$(1)\Rightarrow(2)$
Notice that for any $F_j\in\mathscr{N}(\mathcal{A}_j)$, $j=1,\dots,t$,
it is easy to see that $W \diag(F_1,\dots, F_t)W^{-1}\in\na$.
Therefore, $\dim\na\ge\sum_{j=1}^t\dim\mathscr{N}(\mathcal{A}_i)$.
Next, we show $\dim\na\le\sum_{j=1}^t\dim\mathscr{N}(\mathcal{A}_i)$
by showing that for any $\widehat{Z}\in\na$, it can be written in the form
$\widehat{Z}=W \diag(F_1,\dots, F_t)W^{-1}$ with $F_j\in\mathscr{N}(\mathcal{A}_j)$ for $j=1,\dots,t$.

Now let $Z=W\diag(G_1,\dots,G_t)W^{-1}$,
$Z_{\epsilon}=Z+\epsilon \widehat{Z}$,
where $G_j\in\mathscr{N}(\mathcal{A}_j)$ for $j=1,\dots,t$, $\lambda(G_j)\cap\lambda(G_k)=\emptyset$ for $j\ne k$,
and $\epsilon$ is a positive parameter.
On one hand, for a sufficient small $\epsilon$,
we know that the number of distinct eigenvalues of $Z_{\epsilon}$ should be no less than
that of $Z$, i.e., $\card(\zeta_n^{\opt}(Z_{\epsilon}))\ge \card(\zeta_n^{\opt}(Z))\ge t$;
on the other hand, notice that ${Z}_{\epsilon}\in\na$ 
since $Z$, $\widehat{Z}\in\na$.
Then by Theorem~\ref{thm1}, $\card(\zeta_n^{\opt}({Z}_{\epsilon}))\le \card(\tau_n)=t$.
Then it follows that $\card(\zeta_n^{\opt}(Z_{\epsilon}))=\card(\zeta_n^{\opt}(Z))=t$.
For the partition $\zeta_n^{\opt}(Z_{\epsilon})$, by Lemma~\ref{lem1},
the {\sc geenojbd} problem has a solution $W_{\epsilon}$ satisfying
\begin{align}\label{ze1}
Z_{\epsilon}=W_{\epsilon}\diag(G_{1\epsilon},\dots,G_{t\epsilon})W_{\epsilon}^{-1},
\end{align}
where 
$\lambda(G_{j\epsilon})\cap\lambda(G_{k\epsilon})=\emptyset$ for $j\ne k$.
Now that $(\zeta_n^{\opt}(Z_{\epsilon}),W_{\epsilon})$ is also a solution to the {\sc geenogjbd} problem,
and hence it is equivalent to $(\tau_n,W)$, i.e.,
there exists a permutation matrix $\Pi_t\in\mathbb{R}^{t\times t}$ and
a nonsingular $\tau_n$-block diagonal matrix $T=\diag(T_{jj})$ such that
\begin{align}\label{zetaze}
\zeta_n^{\opt}(Z_{\epsilon})=\tau_n\Pi_t,\quad W_{\epsilon}=WT\Pi,
\end{align}
where $\Pi$ is the block  permutation matrix corresponding with $\tau_n$.
Using \eqref{ze1} and \eqref{zetaze}, we get
\begin{align*}
Z_{\epsilon}=WT\Pi\diag(G_{1\epsilon},\dots,G_{t\epsilon})\Pi^T T^{-1}W^{-1}
=W\diag(F_{1\epsilon},\dots,F_{t\epsilon})W^{-1},
\end{align*}
where $F_{j\epsilon}=T_{jj}G_{i_j\epsilon}T_{jj}^{-1}\in\mathbb{R}^{n_j\times n_j}$
for $j=1,\dots,t$,
$\{i_1,\dots,i_t\}$ is a permutation of $\{1,\dots, t\}$.
Using $Z_{\epsilon}\in\na$,
it is easy to see that $F_{j\epsilon}\in\mathscr{N}(\mathcal{A}_j)$.
Therefore, $F_{j\epsilon}-G_j\in\mathscr{N}(\mathcal{A}_j)$ for $j=1,\dots,t$.
Then it follows that
\[
\widehat{Z}=\frac{1}{\epsilon}(Z_{\epsilon}-Z)
=\frac{1}{\epsilon}W\diag(F_{1\epsilon}-G_1,\dots,F_{t\epsilon}-G_t)W^{-1},
\]
which is the required form.

$(2)\Rightarrow(1)$
Let $(\hat{\tau}_n,\widehat{W})$ be a solution to the {\sc geenogjbd} problem.
It suffices if we can show that $(\hat{\tau}_n,\widehat{W})$ and $(\tau_n,W)$ are equivalent.

First, as both $(\hat{\tau}_n,\widehat{W})$ and $(\tau_n,W)$ are solutions to the {\sc geenogjbd} problem, we know that $\card(\hat{\tau}_n)=\card(\tau_n)=t$.
Let $\hat{\tau}_n=(\hat{n}_1,\dots,\hat{n}_t)$.
By Theorem~\ref{thm1}, there exists a $\widehat{Z}\in\na$
such that
\begin{align}\label{eq:zwgw3}
\widehat{Z}=\widehat{W}\diag(\widehat{G}_1,\dots,\widehat{G}_t)\widehat{W}^{-1},
\end{align}
where $\widehat{G}_j$ is of order $\hat{n}_j$ and $\lambda(\widehat{G}_j)\cap\lambda(\widehat{G}_k)=\emptyset$ for $j\ne k$.
Second, the equality \eqref{dimna} implies that for any element in
$\na$, in particular $\widehat{Z}$, there exist $\widehat{F}_j\in\mathscr{N}(\mathcal{A}_j)$ for $j=1,\dots,t$
such that
\begin{align}\label{eq:zwfw}
\widehat{Z}=W \diag(\widehat{F}_1,\dots, \widehat{F}_t)W^{-1}.
\end{align}
Combining \eqref{eq:zwgw3} and \eqref{eq:zwfw}, we have
\begin{align}\label{wgwwfw}
\widehat{W}\diag(\widehat{G}_1,\dots,\widehat{G}_t)\widehat{W}^{-1}
=W \diag(\widehat{F}_1,\dots, \widehat{F}_t)W^{-1}.
\end{align}
Noticing that the eigenvalues of $\widehat{G}_j$ are the same real number or the same complex conjugate pair,
and so are the eigenvalues of $\widehat{F}_j$,
we know that there is a permutation matrix $\Pi_t\in\mathbb{R}^{t\times t}$ such that
$\hat{\tau}_n=\tau_n\Pi_t$.
The corresponding block permutation matrix $\Pi$ satisfies
$\Pi\diag(\widehat{G}_1,\dots,\widehat{G}_t)\Pi^T
=\diag(\widehat{G}_{i_1},\dots,\widehat{G}_{i_t})$,
where $\{i_1,\dots,i_t\}$ is a permutation of $\{1,\dots, t\}$,
and  for $j=1,\dots,t$, $\widehat{G}_{i_j}$ is similar to $\widehat{F}_j$,
i.e., there exists a nonsingular matrix
$T_{jj}$ of order $n_j$ satisfying $\widehat{F}_j=T_{jj}\widehat{G}_{i_j}T_{jj}^{-1}$.
Then \eqref{wgwwfw} can be rewritten as
\[
\widehat{W}\Pi^T \diag(\widehat{G}_{i_1},\dots,\widehat{G}_{i_t})\Pi\widehat{W}^{-1}
=WT_1\diag(\widehat{G}_{i_1},\dots,\widehat{G}_{i_t})T_1^{-1}W^{-1},
\]
where $T_1=\diag(T_{11},\dots,T_{tt})$ is a nonsingular $\tau_n$-block diagonal matrix.
Using $\lambda(G_j)\cap\lambda(G_k)=\emptyset$ for $j\ne k$, we know that
$T_1^{-1}W^{-1}\widehat{W}\Pi^T$ is a $\tau_n$-block diagonal matrix, denoted by $T_2$.
Thus, $\widehat{W}=W T_1T_2\Pi$.
The conclusion follows immediately since $T_1T_2$ is nonsingular and $\tau_n$-block diagonal.

$(2)\Leftrightarrow(3)$ For any $Z\in\na$, partition $W^{-1}ZW$ as $W^{-1}ZW=[Z_{jk}]$ with $Z_{jk}\in\mathbb{R}^{n_j\times n_k}$.
Then we have the following equivalence relations, which completes the proof.

$(2)\Leftrightarrow$ For any  $j\ne k$, $Z_{jk}=0$.\\
$\Leftrightarrow$ For any $j< k$, the solution to matrix equations
\begin{align*}
\bsmat A_i^{(jj)}& 0\\ 0& A_i^{(kk)}\esmat
\bsmat Z_{jj} & Z_{jk} \\ Z_{kj} & Z_{kk}\esmat
= \bsmat Z_{jj} & Z_{jk} \\ Z_{kj} & Z_{kk}\esmat^T
\bsmat A_i^{(jj)}& 0\\ 0& A_i^{(kk)}\esmat, \quad \mbox{for $i=1,\dots,m$}
\end{align*}
must be in a block diagonal form $\diag(Z_{jj},Z_{kk})$.\\
$\Leftrightarrow$ For any $j< k$, the solutions to matrix equations
 \begin{align}\label{azza0}
A_i^{(jj)}Z_{jk}-Z_{kj}^TA_i^{(kk)}=0, \quad A_i^{(kk)}Z_{kj}-Z_{jk}^TA_i^{(jj)}=0,\quad 
\mbox{for  $i=1,\dots,m$}
\end{align} 
must be zero.\\
$\Leftrightarrow$ For any $j< k$, the coefficient matrix $\widehat{M}_{jk}$ of the following linear system of equations is of full column rank:
\[
\bsmat I_{n_k}\otimes A_1^{(jj)} & A_1^{(kk)^T}\otimes I_{n_j}\\
I_{n_k}\otimes A_1^{(jj)^T} & A_1^{(kk)}\otimes I_{n_j}\\
\vdots & \vdots\\
I_{n_k}\otimes A_m^{(jj)} & A_m^{(kk)^T}\otimes I_{n_j}\\
I_{n_k}\otimes A_m^{(jj)^T} & A_m^{(kk)}\otimes I_{n_j}
\esmat
\bsmat \myvec(Z_{jk}) \\ -\myvec(Z_{kj}^T)\esmat=0.
\]
$\Leftrightarrow$ For any $j< k$, $\widehat{M}_{jk}^T\widehat{M}_{jk}=M_{jk}$ is nonsingular.
\qquad\end{proof}

\bigskip

By the ``(2)$\Leftrightarrow $(3)'' part of the proof for Theorem~\ref{thm:uniq},
we know that the equivalence of all solutions is equivalent to 
that \eqref{azza0} has only zero solution.
Notice that \eqref{azza0} has $2n_jn_k$ unknowns, $2m n_jn_k$ equations.
Since $m>1$, the number of equations is more than the number of unknowns.
Thus, for randomly generated $A_i^{(jj)}$ and $A_i^{(kk)}$,
the solution to \eqref{azza0} equals to zero, almost surely.
Therefore, we may say that for any matrix set $\{A_i\}_{i=1}^m$, the solutions to the corresponding {\sc geenogjbd} problem are equivalent, almost surely.
However, not surprisingly, there are indeed artificial examples in which  the solutions are not equivalent.
Perhaps, the simplest example is the {\sc syejd} problem of 
the scalar matrix set $\{\alpha_i I_n\}_{i=1}^m$,
where $\alpha_i$'s are real numbers.
It is obvious that any orthogonal matrix is a solution, but they are not equivalent.
Another nontrivial example can be constructed by finding some special $A_i^{(jj)}$
and $A_i^{(kk)}$ such that \eqref{azza0} has nontrivial solutions.
See the following example.
\begin{example}\label{eg:nonuniq}
Consider the {\sc syenogjbd} problem of a matrix set $\{A_i\}_{i=1}^m$,
where 
\[
A_i=\diag\left(\bsmat 0 & a_i \\ a_i & b_i\esmat, \bsmat 0 & a_i \\ a_i & b_i\esmat\right),
\] 
 $a_i$'s, $b_i$'s are nonzero real numbers.
Let $W_4=\bsmat 1 & 0 & 0 & -1\\ 0 & 1 & 0 & 0\\ 0 & 1 & 1 &0\\ 0 & 0 & 0 & 1\esmat$,
it is easy to check that both $((2,2), I_4)$ and $((2,2), W_4)$ are solutions.
However, $W_4$ and $I_4$ are not equivalent.
\end{example}

\bigskip

Suppose that $(\tau_n, W)$ is a solution to the {\sc geenogjbd} problem of $\{A_i\}_{i=1}^m$ satisfying \eqref{eq:nojbd}.
Let $\mathcal{T}=[t_{jki}]=[A_i^{(jk)}]\in\mathbb{R}^{n\times n\times m}$ be a third order tensor.
Partition $V=W^{-T}=[V_1\dots V_t]$ with $V_j\in\mathbb{R}^{n\times n_j}$ 
for $j=1,\dots, t$.
Then follow the notations in \cite{de2008decompositions2}, in term of tensor decomposition, \eqref{eq:nojbd} is equivalent to the following type-2  BTD (a generation to the decomposition in rank-$(L_r,L_r,\cdot)$ terms):
\begin{align}\label{td2}
\mathcal{T}=\sum_{j=1}^t \mathcal{C}_j  \bullet_1\mathbf{A}_j \bullet_2 \mathbf{B}_j, 
\end{align}
where $\mathcal{C}_j\in\mathbb{R}^{n_j\times n_j\times m}$ with $\mathcal{C}_j(:,:,i)=A_i^{(jj)}$,
$\mathbf{A}_j=\mathbf{B}_j=V_j$. 
The uniqueness of tenor decompositions are of great importance in applications,
and there is a long list of studies on this subject, 
e.g., \cite{kruskal1977three, de2008decompositions2,stegeman2011uniqueness, domanov2013uniqueness, domanov2013uniqueness2, sorensen2015coupled,sorensen2015new}.
To the best of the authors' knowledge, these studies are mainly dedicated to the sufficient conditions under which the tensor decompositions are unique.  
In particular, in \cite{de2008decompositions2} the essential uniqueness of BTDs was discussed by L. De Lathauwer,
where under some mild conditions, it is shown that type-2 BTD is essentially unique (by \cite[Theorem 6.1 and Remark 6]{de2008decompositions2}).
Theorem~\ref{thm:uniq} we present here gives  not only a sufficient condition but also necessary  one, 
for the uniqueness of this particular BTD in \eqref{td2}, 
from which we can also conclude that BTD in \eqref{td2} is essentially unique.
What's more,  the necessary and sufficient condition
enables us to construct examples that BTDs are not unique 
(e.g.,  example~\ref{eg:nonuniq}),
and also provides us a way to check the BTD  is unique once a BTD is available: 
first, for any $Z_j\in\mathscr{N}(\mathcal{A}_j)$, $j=1,\dots,t$,
check that all eigenvalues of $Z_j$ are the same real number or the same complex conjugate pair (see Remark~\ref{rem2});
second,  for $1\le j<k\le t$, check that $M_{jk}$'s defined in \eqref{mjk} are nonsingular.

\bigskip

Under the assumption that all solutions to the {\sc geenogjbd} problem are equivalent,
we can define the partition of the null space $\na$ as 
\begin{align}\label{zetana}
\zeta(\na):=\{\zeta_n^{\opt}(Z)\; | \; Z=\argmax\{\card(\zeta_n^{\opt}(Z)) \; | \;  Z\in\na\}\},
\end{align}
which forms an {\em equivalence class}.
What's more, we have
\begin{theorem}\label{thm:max}
If all solutions to the {\sc geenogjbd} problem are equivalent,
then for almost all  $Z\in\na$,
 $\zeta_n^{\opt}(Z)\in \zeta(\na)$.
\end{theorem}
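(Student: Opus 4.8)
The plan is to show that the set of $Z\in\na$ with $\card(\zeta_n^{\opt}(Z))<t$ (where $t=\card(\tau_n)$ for the solution $(\tau_n,W)$) is contained in a proper algebraic subvariety of $\na$, hence has measure zero. First I would fix a solution $(\tau_n,W)$ with $\tau_n=(n_1,\dots,n_t)$ satisfying \eqref{eq:nojbd}. By Theorem~\ref{thm1}, no element of $\na$ has an optimal eigenvalue partition of cardinality exceeding $t$, so $\zeta_n^{\opt}(Z)\in\zeta(\na)$ exactly when $\card(\zeta_n^{\opt}(Z))=t$. Under the equivalence hypothesis, Theorem~\ref{thm:uniq} (specifically \eqref{dimna}) lets me parametrize $\na$ concretely: every $Z\in\na$ has the form $Z=W\diag(F_1,\dots,F_t)W^{-1}$ with $F_j\in\mathscr{N}(\mathcal{A}_j)$, and conversely every such tuple gives an element of $\na$. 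Thus $\na\cong\bigoplus_{j=1}^t\mathscr{N}(\mathcal{A}_j)$ as a linear space, and I can work coordinate-wise on the diagonal blocks.

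Next I would identify what it means for $\zeta_n^{\opt}(Z)$ to fail to have cardinality $t$. By Remark~\ref{rem2}, when $(\zeta_n(Z),W)$ is a solution we have $\zeta_n(Z)\sim\zeta_n^{\opt}(Z)$ and each $G_j$ has a single real eigenvalue or a single complex-conjugate pair; more to the point, for a \emph{generic} tuple $(F_1,\dots,F_t)$ I expect each $F_j$ to have a single eigenvalue (real) or single conjugate pair, and these eigenvalues to be pairwise distinct across $j$. Cardinality drops below $t$ precisely when either (a) some $F_j$ has two or more distinct eigenvalues (not a conjugate pair), or (b) two different blocks $F_j,F_k$ share an eigenvalue, so that they must be merged into one subset. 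I would argue (a) is a closed condition of positive codimension on $\mathscr{N}(\mathcal{A}_j)$ — it is cut out by polynomial conditions on the characteristic polynomial coefficients of $F_j$ (vanishing of an appropriate resultant/discriminant-type expression forcing the spectrum into one orbit) — provided it is not identically satisfied; and that non-identical-vanishing follows because the solution $W$ itself exhibits a $Z$ (namely $\diag(I_{n_1},2I_{n_2},\dots,tI_{n_t})$ conjugated by $W$) whose blocks are scalar and whose eigenvalues are distinct, so the generic point of $\bigoplus_j\mathscr{N}(\mathcal{A}_j)$ avoids (a). Similarly (b) is the vanishing of a resultant $\mathrm{Res}(p_j,p_k)$ in the characteristic polynomials $p_j,p_k$ of $F_j,F_k$; it is a polynomial in the coordinates of $\na$, and it is not identically zero because the same explicit $Z$ has distinct block eigenvalues $1,2,\dots,t$. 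Hence the bad set is contained in the zero set of a nonzero polynomial on $\na$, therefore Lebesgue-null, which gives the claim.

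The main obstacle I anticipate is making precise and rigorous the claim in (a) that ``generically each $F_j$ has a single eigenvalue or a single conjugate pair,'' and that the failure of this is an algebraic condition of positive codimension. The subtlety is that $\mathscr{N}(\mathcal{A}_j)$ is a \emph{linear subspace} of $\mathbb{R}^{n_j\times n_j}$ whose generic element need not a priori have a repeated spectrum — indeed for $\mathcal{A}_j$ generic, $\mathscr{N}(\mathcal{A}_j)$ is one-dimensional, spanned by $I_{n_j}$, and then the statement is immediate; but in the nonuniqueness-adjacent regime one must rule out, on the equivalence hypothesis, that $\mathscr{N}(\mathcal{A}_j)$ contains an element with distinct eigenvalues. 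Here I would invoke Theorem~\ref{thm1} at the block level: since $W$ already gives the maximal partition $t$ for $\mathcal{A}$, no $F_j\in\mathscr{N}(\mathcal{A}_j)$ can have an optimal eigenvalue partition of cardinality $>1$ — otherwise one could split block $j$ further and contradict maximality of $t$ for $\mathcal{A}$. This forces every $F_j\in\mathscr{N}(\mathcal{A}_j)$ to have exactly one eigenvalue (real) or one conjugate pair, so condition (a) is in fact \emph{vacuous}, and only (b) — the cross-block eigenvalue collisions — needs the measure-zero argument via nonvanishing of a resultant. With (a) disposed of, the remaining argument is a routine application of the fact that the zero set of a nonidentically-zero real polynomial has measure zero, once we exhibit the explicit $Z=W\diag(I_{n_1},2I_{n_2},\dots,tI_{n_t})W^{-1}$ as a witness that the resultant polynomial does not vanish identically on $\na$.
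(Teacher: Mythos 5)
Your proposal is correct and follows essentially the same route as the paper: both use Theorem~\ref{thm:uniq}(2) to write every $Z\in\na$ as $W\diag(F_1,\dots,F_t)W^{-1}$ with $F_j\in\mathscr{N}(\mathcal{A}_j)$, both observe (via maximality of $t$, as in Remark~\ref{rem2}) that each $F_j$ has a single real eigenvalue or a single conjugate pair, and both reduce the claim to showing that cross-block eigenvalue collisions occur only on a null set. The sole difference is cosmetic: the paper phrases the last step as a probability-zero event under random scalings $\alpha_jF_j$, whereas you cut out the bad set as the zero locus of a resultant polynomial witnessed nonzero by $W\diag(I_{n_1},2I_{n_2},\dots,tI_{n_t})W^{-1}$ --- arguably a cleaner formalization of ``almost all,'' but the same argument.
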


\begin{proof}
Follow the notations in Theorem~\ref{thm:uniq}.
On one hand, for any $F_j\in\mathscr{N}(\mathcal{A}_j)$, $j=1,\dots,t$,
we know that $\alpha_j F_j \in\mathscr{N}(\mathcal{A}_j)$,
where $\alpha_j$ is any real number.
By Remark~\ref{rem2},
the eigenvalues of $F_j$ are the same real number $\lambda_j$
or a complex conjugate pair $\{\lambda_j,\bar{\lambda}_j\}$.
Then the possibility
$\mathbb{P}_{jk}(\alpha_j\lambda_j=\alpha_k \lambda_k \mbox{ or }\alpha_k\bar{\lambda}_k)=0$ for any $j\ne k$.
On the other hand,
when all solutions to the {\sc geenogjbd} problem are equivalent,
by Theorem~\ref{thm:uniq},
we know that for any $Z\in\na$,
it can be written as $Z=W \diag(\widehat{F}_1,\dots, \widehat{F}_t)W^{-1}$,
where $\widehat{F}_j\in\mathscr{N}(\mathcal{A}_j)$, $j=1,\dots,t$.
Using $\mathbb{P}_{jk}=0$, we know that $\lambda(\widehat{F}_j)\cap\lambda(\widehat{F}_k)=\emptyset$ almost surely.
Therefore, $\card(\zeta_n^{\opt}(Z))$ is maximized almost surely.
By the definition of $\zeta(\na)$, the conclusion follows.
\qquad\end{proof}

\subsection{Determining a solution to the \textsc{geenogjbd} problem}\label{subsec:solvenogjbd}
If all solutions to the {\sc geenogjbd} problem are equivalent,
using Theorems~\ref{thm1} and \ref{thm:max}, theoretically, we can solve the {\sc geenogjbd} problem by the following procedure:
\begin{enumerate}
\item[] Step~1, solve a basis of $\mathcal{N}(\mathcal{A})$, denote by $\{Z_1,Z_2,\dots,Z_{\ell}\}$;
\item[] Step~2, set $Z=\sum_j{\alpha_j Z_j}$, where $\alpha_j$'s are random real numbers;
\item[] Step~3, compute the eigenvalue decomposition of $Z$ corresponding with $\zeta_n^{\opt}(Z)$ as in \eqref{eigdecomp}.
\end{enumerate}
Then $(\zeta_n^{\opt}(Z), W)$ is a solution to the {\sc geenogjbd} problem.

Some details follow.
In step~1, using the Kronecker product notation, $A_i Z=Z^T A_i$ for $i=1,\dots, m$ is equivalent to
\begin{align}
\mathcal{L}\myvec(Z)=0,
\end{align}
where
\begin{align}\label{eq:L}
\mathcal{L}=\bsmat I\otimes A_i - A_i^T\otimes I \Pi\\
\vdots\\
I\otimes A_i - A_i^T\otimes I \Pi
\esmat,
\end{align}
$\Pi$ is the perfect shuffle permutation \cite[Chap. 12.3]{van1996matrix} of order $n^2$ such that $\Pi \myvec(Z)=\myvec(Z^T)$.
By computing the singular value decomposition (SVD) of $\mathcal{L}$,
we can obtain a basis of $\na$ from a basis of $\mathscr{N}(\mathcal{L})$. 
In step 3, we first compute the Schur decomposition of $Z=QTQ^T$ (with proper ordering of the eigenvalues), 
then  $\zeta_n^{\opt}(Z)$ can be determined by the algebraic multiplicities of the distinct eigenvalues of $T$,
 the eigenvalue decomposition of $Z$ can be computed via Algorithm~7.6.3 in \cite{van1996matrix}.
In next section, we will discuss the numerical methods for the {\sc geanogjbd} problem in detail,
which of course can be applied to the {\sc geenogjbd} problem by simply setting $\epsilon=0$.

\section{Solving the \textsc{geanogjbd} problem}\label{approx}
In this section, 
we show that  the {\sc geanogjbd} problem can be solved in a similar procedure
as the procedure in subsection~\ref{subsec:solvenogjbd} for the {\sc geenogjbd} problem:
\begin{enumerate}
\item  find  a ``basis'' for a ``near-null space'' of $\mathcal{A}$ \eqref{approxna};
\item determine a $Z$ from the ``near-null space'';
\item determine an eigenvalue partition of $Z$ and compute its corresponding eigenvalue decomposition.
\end{enumerate}
However, some fundamental questions need to be answered first:
\begin{enumerate}
\item What is a ``near-null space'' precisely?
\item When $Z$ in the ``near-null space'' has an eigenvalue decomposition  \eqref{eigdecomp} with certain eigenvalue partition $\tau_n$,
is the value of the corresponding cost function
\begin{equation}\label{ftauw}
f(\tau_n,W):=\sum_{i=1}^m\|\OffBdiag_{\tau_n}(W^T A_iW)\|_F^2
\end{equation} 
small?
\item How to determine a $Z$ from the ``near-null space''
such that the eigenvalues of $Z$ has as many clusters as possible and the gap between different clusters is as large as possible?
\end{enumerate}
In subsection~\ref{subsec:fun}, we will first answer questions 1 and 2, 
and then discuss some properties of the eigenvalues of $Z$,
which will be used to cluster the eigenvalues.
For question 3,
two ways to determine the matrix $Z$ and the its eigenvalue partition are proposed,
which leads to two numerical methods for the {\sc geanogjbd} problem, 
namely, {\sc geanogjbd-greedy} and {\sc geanogjbd-consv}, 
which are presented in subsections~\ref{subsec:alg1} and \ref{subsec:alg2}, respectively.

\subsection{Some fundamentals}\label{subsec:fun}
Generally speaking, the null space $\na$ for matrices that can not be exactly joint diagonalized, can be spanned by $\{I_n\}$, which give a trivial solution $((n), I_n)$
to the {\sc geanogjbd} problem.
In order to find a nontrivial solution, we need to define a  ``near-null space'' for the matrix set $\mathcal{A}$.
Let the SVD of $\mathcal{L}$ be 
\begin{align}\label{svdl}
\mathcal{L}=\mathcal{U}\Sigma\mathcal{V}^T,
\end{align}
where $\mathcal{L}$ is defined in \eqref{eq:L},
$\mathcal{U}\in\mathbb{R}^{mn^2\times mn^2}$ and $\mathcal{V}=[v_1\dots v_{n^2}]\in\mathbb{R}^{n^2\times n^2}$ are both orthogonal matrices,
the main diagonal elements of $\Sigma$ are
\[
\sigma_1\ge \sigma_2\ge \dots\ge \sigma_{n^2-\ell}>\delta\ge \sigma_{n^2-\ell+1}\ge \dots\ge \sigma_{n^2}\ge 0,
\]
$\delta \ge0$ is a parameter.
We define the {\em $\delta$-null space} of $\mathcal{A}$ as
\begin{equation}\label{deltana}
\nad:=\subspan\{\reshape(v_{n^2-\ell+j},n,n),\, j=1,\dots,\ell\}.
\end{equation}
Then for any $Z\in\nad$, it holds that
$\|\mathcal{L}\myvec(Z)\|_2\le \delta \|\myvec(Z)\|_2$,
which is equivalent to
$\sum_{i=1}^m\|A_iZ-Z^TA_i\|_F^2\le \delta^2 \|Z\|_F^2$.
Hereafter, we take $\nad$ as the  ``near-null space'',
which is controlled by the parameter $\delta$, the larger $\delta$ is , the larger the space is.
Note also that $\mathscr{N}(\mathcal{A};0)=\na$.
This answers question 1.

\medskip

The following theorem gives an answer to question 2.

\begin{theorem}\label{thm:anogjbd}
If $\sum_{i=1}^m\|A_iZ-Z^TA_i\|_F^2\le \delta^2 \|Z\|_F^2$,
and $Z$ has an eigenvalue decomposition \eqref{eigdecomp}  with $W$ satisfying \eqref{wtau},
then
\begin{align}\label{ineq:offblk}
f(\tau_n,W)
\le \frac{\delta^2\|Z\|_F^2\|W\|_2^4}{\sep(G)^2},
\end{align}
where $f(\tau_n,W)$ is defined in \eqref{ftauw}, $\sep(G)=\min_{j\ne k}\sep(G_j^T,G_k)$, and $\sep(G_j^T,G_k)=\min_{X}\frac{\|G_j^T X-XG_k\|_F}{\|X\|_F}$.
\end{theorem}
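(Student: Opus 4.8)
The plan is to relate the off-block-diagonal part of $W^TA_iW$ directly to the commutator $A_iZ - Z^TA_i$, exploiting the block structure that the eigenvalue decomposition $Z = W\diag(G_1,\dots,G_s)W^{-1}$ imposes. First I would set $V = W^{-T}$, so that $W^TA_iW$ has the same block-diagonal structure question as $W^{-1}(A_iZ - Z^TA_i)W^{-T}$; more precisely, substituting the eigenvalue decomposition, I compute
\begin{align*}
W^{-1}(A_iZ - Z^TA_i)W^{-T} = (W^{-1}A_iW^{-T})\,G - G^T\,(W^{-1}A_iW^{-T}),
\end{align*}
where $G = \diag(G_1,\dots,G_s)$. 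Denote $B_i := W^{-1}A_iW^{-T}$... wait, the correct normalization is $B_i := W^T A_i W$ conjugated appropriately; I would instead write $C_i := W^{-1}A_iW^{-T}$ and observe $A_iZ - Z^TA_i = W^{-T}(C_i G - G^T C_i)W^{-1}$, so that $\|A_iZ-Z^TA_i\|_F$ controls $\|C_iG - G^TC_i\|_F$ up to factors of $\|W\|_2$ and $\|W^{-1}\|_2$. The key point is that if we partition $C_i = [C_i^{(jk)}]$ conformally with $\tau_n$, then the $(j,k)$ block of $C_iG - G^TC_i$ is exactly $C_i^{(jk)}G_k - G_j^T C_i^{(jk)}$, and by definition of $\sep$,
\begin{align*}
\|C_i^{(jk)}G_k - G_j^TC_i^{(jk)}\|_F \ge \sep(G_j^T,G_k)\,\|C_i^{(jk)}\|_F \ge \sep(G)\,\|C_i^{(jk)}\|_F \quad (j\ne k).
\end{align*}

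Second, I would assemble these block estimates. Summing over $i$ and over all off-diagonal blocks $j\ne k$ gives
\begin{align*}
\sum_{i=1}^m \|\OffBdiag_{\tau_n}(C_i)\|_F^2 \le \frac{1}{\sep(G)^2}\sum_{i=1}^m\|C_iG - G^TC_i\|_F^2.
\end{align*}
Then I need to pass from $C_i = W^{-1}A_iW^{-T}$ back to $W^TA_iW$, which appears in $f(\tau_n,W)$; this requires relating $\OffBdiag_{\tau_n}(W^TA_iW)$ to $\OffBdiag_{\tau_n}(C_i)$. The cleanest route is to note that $W^TA_iW = (W^TW)\,C_i^{-1}\cdots$ — no; rather, since conjugation by $W$ versus $W^{-T}$ differ, I would instead redo the first step with the roles arranged so that the matrix being conjugated is already $W^TA_iW$. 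Concretely: from $A_iZ = Z^TA_i + (A_iZ - Z^TA_i)$ and $Z = WGW^{-1}$, left-multiply by $W^T$ and right-multiply by $W$ to get $(W^TA_iW)G = G^T(W^TA_iW) + W^T(A_iZ-Z^TA_i)W$. Setting $B_i := W^TA_iW$, the $(j,k)$ block equation reads $B_i^{(jk)}G_k - G_j^TB_i^{(jk)} = [W^T(A_iZ-Z^TA_i)W]^{(jk)}$, and the $\sep$ bound applies to the left side for $j\ne k$. Summing the squared Frobenius norms over off-diagonal blocks and over $i$, and using $\|W^T(\cdot)W\|_F \le \|W\|_2^2\|\cdot\|_F$ together with the hypothesis $\sum_i\|A_iZ-Z^TA_i\|_F^2 \le \delta^2\|Z\|_F^2$, yields
\begin{align*}
f(\tau_n,W) = \sum_{i=1}^m\|\OffBdiag_{\tau_n}(B_i)\|_F^2 \le \frac{1}{\sep(G)^2}\sum_{i=1}^m\|W^T(A_iZ-Z^TA_i)W\|_F^2 \le \frac{\|W\|_2^4\,\delta^2\|Z\|_F^2}{\sep(G)^2},
\end{align*}
which is the claimed inequality \eqref{ineq:offblk}.

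I expect the main obstacle to be bookkeeping the conjugations correctly — making sure the estimate is stated for $B_i = W^TA_iW$ (the matrix that actually enters $f$) rather than for some other conjugate, so that only the factor $\|W\|_2^4$ (and not also $\|W^{-1}\|_2$) appears. The block-wise $\sep$ inequality $\|XG_k - G_j^TX\|_F \ge \sep(G_j^T,G_k)\|X\|_F$ is immediate from the definition $\sep(G_j^T,G_k) = \min_X \|G_j^TX - XG_k\|_F/\|X\|_F$ (noting $\|XG_k - G_j^TX\|_F = \|G_j^TX - XG_k\|_F$), and the Sylvester operators are invertible precisely because $\lambda(G_j)\cap\lambda(G_k) = \emptyset$, so $\sep(G) > 0$. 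The constraint \eqref{wtau} on $W$ plays no role in deriving \eqref{ineq:offblk} itself; it is presumably invoked elsewhere to keep $\|W\|_2$ and $\|Z\|_F$ from being artificially rescaled, so I would not use it in this proof beyond noting it guarantees $W$ is well-scaled. The only genuine inequalities used are submultiplicativity of the spectral norm against the Frobenius norm and the $\sep$ lower bound, so once the algebraic identity $B_iG - G^TB_i = W^T(A_iZ - Z^TA_i)W$ is in hand, the rest is routine.
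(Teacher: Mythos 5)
Your final argument is exactly the paper's proof: the identity $(W^TA_iW)G - G^T(W^TA_iW) = W^T(A_iZ-Z^TA_i)W$, the bound $\|W^TXW\|_F\le\|W\|_2^2\|X\|_F$, restriction to the off-diagonal blocks, and the blockwise $\sep$ lower bound. The initial detour through $C_i=W^{-1}A_iW^{-T}$ is correctly abandoned, and your closing observation that \eqref{wtau} is not needed for the inequality itself is consistent with the paper, whose proof likewise never invokes it.
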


\begin{proof}
Let $W^TA_iW=[A_i^{(jk)}]$.
Direct calculations give rise to
\begin{align*}
\delta^2\|Z\|_F^2\|W\|_2^4
&\ge \|W^T\|_2^2 \sum_{i=1}^m \|A_iZ-Z^TA_i\|_F^2 \|W\|_2^2\\
&\ge \sum_{i=1}^m \|[A_i^{(jk)}]G-G^T[A_i^{(jk)}]\|_F^2\\
&\ge \sum_{i=1}^m \sum_{1\le j\ne k\le s}\|A_i^{(jk)}G_k-G_j^TA_i^{(jk)}\|_F^2\\
&\ge \sum_{i=1}^m \sum_{1\le j\ne k\le s} \sep(G)^2 \|A_i^{(jk)}\|_F^2\\
&= \sep(G)^2 \sum_{i=1}^m  \|\OffBdiag_{\tau_n}(W^T A_i W)\|_F^2.
\end{align*}
The conclusion follows.
\qquad\end{proof}

If $\delta$ is small and $\sep(G)/\|Z\|_F$ is not, then the right hand side of \eqref{ineq:offblk} will be small,
which means that $f(\tau_n, W)$ is small.
In general
$\sep(G)/\|Z\|_F$ is large if the gap between $\lambda(G_j)$ and $\lambda(G_k)$ is large \cite{xu1997lower}.
Therefore, when solving the {\sc geanogjbd} problem with the procedure in subsection~\ref{subsec:solvenogjbd}, it is critical to 
choose a ``proper'' $\delta$
for the approximate null space $\mathscr{N}(\mathcal{A};\delta)$
and a ``good'' $Z\in\nad$ in the sense that the eigenvalues of $Z$ has as many clusters as possible and the gap between different clusters is as large as possible.  

How to choose a ``proper'' $\delta$ can be very tricky.
A small $\delta$ will lead to a small $f(\tau_n,W)$, but a small $\card(\tau_n)$ too;
A large $\delta$, on the other hand, will lead to a large $\card(\tau_n)$,
but also  a large $f(\tau_n, W)$.
Notice that $I_n\in\na\subset\mathscr{N}(A;\delta)$, 
then the smallest singular value  $\sigma_{n^2}$ of $\mathcal{L}$ 
defined in \eqref{eq:L} must be zero,
since $\myvec(I_n)$ is a right singular vector of $\mathcal{L}$ corresponding with the zero singular value.
The second smallest singular value $\sigma_{n^2-1}$ of $\mathcal{L}$ is in general nonzero for the {\sc geanogjbd} problem.
The similarity transformation matrix obtained from the eigenvalue decomposition of 
$Z$ (reshaped from the right singular vector of $\mathcal{L}$ corresponding with $\sigma_{n^2-1}$)
corresponding with some eigenvalue partition of $Z$, 
is usually a good solution to the {\sc geanojbd} problem of $\mathcal{A}$.
Thus, it is reasonable to set $\delta=\gamma\sigma_{n^2-1}$,
where $\gamma>1$ is some constant.

Finding the ``best'' $Z$ to fully answer question 3 is difficult. 
In next two subsections,
we propose two ways to find the matrix $Z$ and its eigenvalue partition: a greedy way and a conservative way,
which leads to the algorithms {\sc geanogjbd-greedy} and {\sc geanogjbd-consv}, respectively.

In order to determine the eigenvalue partition of $Z\in\nad$, 
what follows we discuss some properties of the eigenvalues of $Z$.

\begin{theorem}\label{rp}
For any $Z\in\nad$, let $(\lambda,x)$ be an eigenpair of $Z$ and $\|x\|_2=1$.
If $ \sum_{i=1}^m  |x^*A_i x|^2\ne 0$, then
\begin{align}
|\lambda-\bar{\lambda}|\le \frac{\delta^2\|Z\|_F^2}{ \sum_{i=1}^m  |x^*A_i x|^2}.
\end{align}
\end{theorem}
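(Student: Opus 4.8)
The plan is to extract $\lambda-\bar\lambda$ from the commutator defect $A_iZ-Z^TA_i$ that membership in $\nad$ controls, by testing that defect against the eigenvector $x$. Since $Z$ is real and $Zx=\lambda x$, taking the conjugate transpose of the eigenequation gives $x^*Z^T=x^*Z^*=\bar\lambda x^*$. Hence, for each $i$,
\[
x^*(A_iZ-Z^TA_i)x = x^*A_i(Zx) - (x^*Z^T)A_ix = (\lambda-\bar\lambda)\,x^*A_ix.
\]
This single identity is the crux of the argument: it isolates the imaginary-part gap $\lambda-\bar\lambda$, weighted by the (possibly complex) Rayleigh quotient $x^*A_ix$, entirely in terms of a quantity that $Z\in\nad$ bounds.

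Next I would take moduli, square, and sum over $i$. Because $\|x\|_2=1$, we have $|x^*(A_iZ-Z^TA_i)x|\le\|A_iZ-Z^TA_i\|_2\le\|A_iZ-Z^TA_i\|_F$, so squaring the identity and summing yields
\[
|\lambda-\bar\lambda|^2\sum_{i=1}^m|x^*A_ix|^2 = \sum_{i=1}^m|x^*(A_iZ-Z^TA_i)x|^2 \le \sum_{i=1}^m\|A_iZ-Z^TA_i\|_F^2.
\]
The defining inequality of the $\delta$-null space, $\sum_{i=1}^m\|A_iZ-Z^TA_i\|_F^2\le\delta^2\|Z\|_F^2$, bounds the right-hand side by $\delta^2\|Z\|_F^2$. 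Dividing by the quantity $\sum_{i=1}^m|x^*A_ix|^2$, which is nonzero by hypothesis, then delivers exactly the stated right-hand side as an upper bound.

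The only genuinely subtle point, and the one to pin down in the displayed conclusion, is the power of $|\lambda-\bar\lambda|$ that the argument actually controls; a quick homogeneity check settles it. Replacing $Z$ by $cZ$ keeps the matrix in $\nad$ (a subspace) and sends $\lambda\mapsto c\lambda$, $\|Z\|_F\mapsto|c|\,\|Z\|_F$, while leaving $x$ and $x^*A_ix$ unchanged; thus both $|\lambda-\bar\lambda|^2$ and $\delta^2\|Z\|_F^2/\sum_i|x^*A_ix|^2$ scale as $c^2$, whereas $|\lambda-\bar\lambda|$ alone scales only as $c$. Consequently the homogeneity-consistent bound produced above is
\[
|\lambda-\bar\lambda|^2 \le \frac{\delta^2\|Z\|_F^2}{\sum_{i=1}^m|x^*A_ix|^2},
\]
so the left-hand side of the printed inequality should carry the square: the proof yields precisely the stated right-hand side as a bound for $|\lambda-\bar\lambda|^2$, not for $|\lambda-\bar\lambda|$ (the literal display fails the $c\to 0$ scaling test and so cannot hold as written). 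I therefore expect no obstacle in the estimation itself — it reduces to the identity above plus $|x^*Mx|\le\|M\|_F$ for unit $x$ and the $\nad$ inequality — the care goes entirely into recording the correct exponent in the conclusion.
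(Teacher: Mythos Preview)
Your argument is correct and follows exactly the paper's approach: use $Zx=\lambda x$ and $x^*Z^T=\bar\lambda x^*$ to obtain $x^*(A_iZ-Z^TA_i)x=(\lambda-\bar\lambda)\,x^*A_ix$, then square, sum over $i$, and invoke the $\nad$ inequality. Your homogeneity check is also on target: the paper's own proof writes $\sum_i|x^*(A_iZ-Z^TA_i)x|^2=\sum_i|\lambda-\bar\lambda|\,|x^*A_ix|^2$, dropping the square on $|\lambda-\bar\lambda|$, so it carries the same exponent slip you flagged; the bound the argument actually establishes is $|\lambda-\bar\lambda|^2\le\delta^2\|Z\|_F^2\big/\sum_i|x^*A_ix|^2$.
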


\begin{proof}
It follows from $Z\in\nad$ that $\sum_{i=1}^m\|A_iZ-Z^TA_i\|_F^2\le \delta^2\|Z\|_F^2$.
Then using $Zx=\lambda x$, we have
\begin{align*}
\delta^2\|Z\|_F^2
&\ge \sum_{i=1}^m\|A_iZ-Z^TA_i\|_F^2
\ge \sum_{i=1}^m |x^*(A_iZ-Z^TA_i)x|^2\\
&=\sum_{i=1}^m |\lambda-\bar{\lambda}| \ |x^*A_i x|^2
=|\lambda-\bar{\lambda}|  \sum_{i=1}^m  |x^*A_i x|^2.
\end{align*}
The conclusion follows.
\qquad\end{proof}

Theorem~\ref{rp} tells that when $\delta$ is small, 
 the imaginary part of any eigenvalue of $Z\in\nad$ will be small.
Consequently, we may cluster the eigenvalues by their real parts only.

\subsection{\textsc{geanogjbd} with a greedy strategy}\label{subsec:alg1}
Using similar arguments as Theorem~\ref{thm:max},
we may claim that for almost all $Z\in\nad$,
$\zeta_{n}^{\opt}(Z)$ is maximized.
So we may determine a $Z\in\nad$ and an eigenvalue partition of it for the {\sc geanogjbd} problem in a greedy way:
\begin{enumerate}
\item Arbitrarily determine a $Z$ from $\nad$, 
say a random linear combination of an orthonormal basis
\footnote{
For any two square matrices  $A$, $B\in\mathbb{R}^{n\times n}$, their inner product is defined as $(A,B)=\tr(A^T B)$.
} of $\nad$;
\item Compute the eigenvalue of $Z$, then determine the eigenvalue partition 
by detecting the gap between the real parts of the eigenvalues.
\end{enumerate}
With this greedy strategy, we propose the greedy algorithm for the {\sc geanogjbd} problem.

\begin{algorithm}
  \caption{ \textsc{geanogjbd-greedy} 
    \label{alg:greedy}}
  \begin{algorithmic}[1]
   \Require{The matrix set $\mathcal{A}=\{A_i\}_{i=1}^m$, a parameter $\gamma$ used to control the approximate null space $\nad$.}
   \Ensure{A partition $\tau_n$, a nonsingular matrix $W$ and $f(\tau_n,W)$. }
  \Statex
\State Compute the SVD of $\mathcal{L}$ as in \eqref{svdl};
\State 
Set $\delta=\gamma\sigma_{n^2-1}$, $\ell=\argmax_j \sigma_{n^2-j+1}<\gamma\sigma_{n^2-1}$;
 For $j=1,\dots, \ell$, let $Z_j=\reshape(v_{n^2-j+1},n,n)$;
\State Set $Z=\sum_{j=1}^{\ell}\alpha_j Z_j$, where $\alpha_j$'s are random real numbers;
\State
Compute the real Schur decomposition of $Z=QTQ^T$,
where the real parts of the eigenvalues $\lambda_1,\dots,\lambda_n$ of $T$ are sorted in an ascending order;
\State
Find indices $i_1,\dots, i_{t-1}$ such that $\Re(\lambda_{i_j+1})-\Re(\lambda_{i_j})\ge \mu (\Re(\lambda_{n})-\Re(\lambda_{1}))$ for all possible $j$;
\State  Set $\tau_n=(n_1,\dots,n_t)$,
where for $j=1,\dots,t$, $n_j=i_j-i_{j-1}$ ($i_0=0,i_{t}=n$);
\State 
Compute the eigenvalue decomposition of $T$ corresponding with $\tau_n$, $T=W\diag(T_1,\dots,T_t)W^{-1}$;
\State For $j=1,\dots,t$, compute the `economic' QR factorization of $W(:,i_{j-1}+1:i_{j})$, i.e.,
$W(:,i_{j-1}+1:i_j)=U_jR_j$,
where $U_j\in\mathbb{R}^{n\times n_j}$, $R_j\in\mathbb{R}^{n_j\times n_j}$.
\State
Compute $W=Q[U_1\,\dots\, U_t]$ and  $f(\tau_n,W)=\sum_{i=1}^m\|\OffBdiag_{\tau_n}(W^T A_i W)\|_F^2$.
  \end{algorithmic}
\end{algorithm}

Several remark follows.
\begin{remark}
\begin{enumerate}
\item On input, the parameter is set as $\gamma=1.2$ in our numerical tests.
\item Step 1, the overall computational cost of \textsc{geanogjbd-greedy} (and also \textsc{geanogjbd-consv}) is dominated by the computation of the SVD of $\mathcal{L}$,
which requires $O(n^6)$ flops.
So when $n$ is large, the algorithm can be slow.
In order to improve the efficiency of the algorithm, it is worth exploring the structure of $\mathcal{L}$ to design efficient methods to compute its SVD.
More work are needed here.

\item Step 2, $\{Z_1,\dots,Z_{\ell}\}$ forms an orthonormal basis of $\nad$.
\item Step 4, in order to make the real parts of the eigenvalues in an ascending order,
a reorder of the eigenvalues is required, which can be done by Algorithm~7.6.1 in \cite{van1996matrix}. 
\item Step 5, the parameter $\mu$ is used to detect the gap of the eigenvalues. 
If the difference between the real parts of two  consecutive eigenvalues $\Re(\lambda_{i+1})-\Re(\lambda_{i})$ is smaller than $\mu (\Re(\lambda_{n})-\Re(\lambda_{1}))$,
we take them as in the same cluster, in two different clusters otherwise.
In our numerical tests, we set $\mu=\frac{1}{8(n-1)}$. 
\item Step 7, the eigenvalue decomposition can be computed via Algorithm~7.6.3 in \cite{van1996matrix}.
\item By computing $W(:,i_{j-1}+1:i_j)=U_jR_j$ in Step 7, $W$ in Step 8 satisfies \eqref{wtau}.
\item Strictly speaking, the solution $(\tau_n,W)$ returned by the algorithm may  not satisfy
$f(\tau_n,W) \le \epsilon^2$.
But according to Theorem~\ref{thm:anogjbd}, $f(\tau_n,W)$ will not be large if $\delta$ is small.
As $\delta$ is set as $\gamma \sigma_{n^2-1}$, it will be reasonably small since $\sigma_{n^2-1}$ is 
the smallest singular vector which corresponds with a nontrivial solution to the {\sc geanogjbd} problem.

\end{enumerate}
\end{remark}

\subsection{\textsc{geanogjbd} with a conservative strategy}\label{subsec:alg2}

Numerically, when $Z$ is arbitrarily chosen from $Z\in\nad$,
it may be  unstable to compute the eigenvalue partition of $Z$ 
corresponding with $\zeta_n^{\opt}(Z)$.
In order to deal with such instability, we prefer to find the partition in a ``conservative'' way:
In the first step, find a ``good'' $Z$ in $\nad$ in the sense that
the real parts of eigenvalues of $Z$ can be split into two clusters and the gap between these two clusters are relatively large.
Compute an eigenvalue decomposition of $Z$ with two diagonal blocks,
each block corresponds with a cluster of eigenvalues.
Approximately block diagonalizing $A_i$ by a congruence transformation
(the transformation matrix is nothing but the similarity transformation matrix in the eigenvalue decomposition of $Z$),
then the original {\sc geanogjbd} problem can be decoupled 
into two separate  {\sc geanogjbd} problems with smaller sizes.
For each smaller problem, we can perform the above procedure recursively.

From the above discussion, we can see that the key step is to find a ``good''  $Z$.
Next we show that there is a lower bound for the maximum gap between the real parts of the eigenvalues.
\begin{theorem}\label{thm3}
Let $\theta_1\le \dots\le \theta_n$ be the real parts of the eigenvalues of $Z\in\nad$.
If $\tr(Z)=0$ and $\tr(Z^2)=\eta\ge 0$,
then
\[
g:=\max_{1\le j\le n-1}|\theta_{j+1}-\theta_j|\ge \sqrt{\frac{8\eta}{(n-1)n^2}}.
\]
\end{theorem}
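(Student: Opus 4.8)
The plan is to strip the statement down to a purely elementary inequality for the sorted reals $\theta_1\le\cdots\le\theta_n$: the membership $Z\in\nad$ is not actually needed, only that $Z$ is a real $n\times n$ matrix (so its eigenvalues come in conjugate pairs), that $\tr(Z)=0$, $\tr(Z^2)=\eta$, and that consecutive gaps of the $\theta_j$'s are at most $g$. Throughout one may assume $n\ge 2$, since otherwise the right-hand side is not defined.

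First I would translate the two trace conditions. Writing the eigenvalues of $Z$ as $\lambda_j=\theta_j+\mathrm{i}\phi_j$ with $\phi_j\in\mathbb{R}$, the relation $\tr(Z)=\sum_j\lambda_j\in\mathbb{R}$ forces $\sum_{j=1}^n\theta_j=\tr(Z)=0$, and the relation $\tr(Z^2)=\sum_j\lambda_j^2\in\mathbb{R}$ forces $\sum_j\theta_j\phi_j=0$ together with
\[
\eta=\tr(Z^2)=\sum_{j=1}^n(\theta_j^2-\phi_j^2)\le\sum_{j=1}^n\theta_j^2 .
\]
Hence it suffices to bound $\sum_j\theta_j^2$ from above by $g^2$ times a function of $n$.

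Next I would use the centered identity
\[
\sum_{j=1}^n\theta_j^2=\frac1n\sum_{1\le j<k\le n}(\theta_k-\theta_j)^2,
\]
which holds precisely because $\sum_j\theta_j=0$. Since $\theta_1\le\cdots\le\theta_n$ and every consecutive gap is at most $g$, telescoping gives $0\le\theta_k-\theta_j\le(k-j)g$ for $j<k$, whence
\[
\sum_{j=1}^n\theta_j^2\le\frac{g^2}{n}\sum_{1\le j<k\le n}(k-j)^2=\frac{g^2}{n}\sum_{d=1}^{n-1}(n-d)d^2=\frac{g^2\,n(n-1)(n+1)}{12},
\]
the last equality being the standard evaluation $\sum_{d=1}^{n-1}(n-d)d^2=\tfrac1{12}n^2(n-1)(n+1)$.

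Combining with the previous display, $\eta\le\frac{g^2}{12}n(n-1)(n+1)$; and since $n+1\le\frac32 n$ for $n\ge 2$, this gives $\eta\le\frac{g^2}{8}n^2(n-1)$, i.e. $g\ge\sqrt{8\eta/((n-1)n^2)}$, as claimed. There is no genuine obstacle here—every step is routine—so the only real content is choosing the two moves above: expressing $\sum_j\theta_j^2$ as an average of pairwise squared differences, and then applying the crude relaxation $n+1\le\frac32 n$. Note that the intermediate bound $\sum_j\theta_j^2\le\frac{g^2}{12}n(n-1)(n+1)$ is sharp (equality when all consecutive gaps equal $g$), so all the slack in the final estimate is introduced by that last relaxation; in fact the slightly stronger bound $g\ge\sqrt{12\eta/(n(n-1)(n+1))}$ is available by the same argument.
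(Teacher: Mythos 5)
Your proof is correct, and it takes a genuinely different route from the paper's. The paper also reduces to bounding $\sum_j\theta_j^2\ge\eta$ in terms of the gap vector $d=(d_1,\dots,d_{n-1})$, $d_j=\theta_{j+1}-\theta_j$, but it does so by writing $\sum_j\theta_j^2=d^TFd$ with $f_{jk}=\min\{j,k\}-jk/n$ (obtained by eliminating $\theta_1$ via $\tr(Z)=0$ and computing the Hessian of the resulting quadratic), then estimating $d^TFd\le\|F\|_2\,d^Td\le\|F\|_\infty\,(n-1)g^2$ with $\|F\|_\infty\le n^2/8$ from the row sums $\sum_k f_{jk}=j(n-j)/2$. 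You instead invoke the centered-moment identity $\sum_j\theta_j^2=\frac1n\sum_{j<k}(\theta_k-\theta_j)^2$ and the telescoping bound $\theta_k-\theta_j\le(k-j)g$, which gives the exact box-constrained maximum $g^2\,\mathbf{1}^TF\mathbf{1}=g^2\,n(n-1)(n+1)/12$ rather than the cruder product of norms; your route therefore avoids the Hessian computation entirely and yields the strictly sharper bound $g\ge\sqrt{12\eta/(n(n-1)(n+1))}$ for $n>2$ (coinciding with the paper's at $n=2$), which you then deliberately relax via $n+1\le\tfrac32 n$ to recover the stated constant. Your handling of the preliminary step $\eta=\tr(Z^2)=\sum_j(\theta_j^2-\phi_j^2)\le\sum_j\theta_j^2$ is the same as the paper's (the paper states the inequality without the conjugate-pair justification you supply), and your observation that $\nad$-membership is never used is accurate. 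No gaps.
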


\begin{proof}
Let $d_0=0$, $d_{j}=\theta_{j+1}-\theta_j$ for $j=1,\dots,n-1$.
Using $\tr(Z)=\sum_{i=1}^n\theta_j=0$, we have
\[
\theta_1=-\frac{\sum_{i=1}^{n-1} (n-i)d_i}{n}.
\]
Let $d=[d_1 \ \dots \ d_{n-1}]^T$, define $f(d):=\sum_{i=0}^{n-1}(\theta_1+d_1+\dots+d_i)^2$.
By calculations, we have
\begin{align*}
\frac{\partial f}{\partial d_j}&=2\sum_{i=j}^{n-1}(\theta_1+d_1+\dots+d_i)-
2\sum_{i=0}^{n-1}(\theta_1+d_1+\dots+d_i)\frac{n-j}{n},\\
\frac{\partial^2 f}{\partial d_j\partial d_k}&=
2\sum_{i=\max\{j,k\}}^{n-1}1-2\sum_{i=k}^{n-1}\frac{n-j}{n}-
2\sum_{i=j}^{n-1}\frac{n-k}{n}+
2\sum_{i=0}^{n-1}\frac{n-k}{n}\frac{n-j}{n}\\
&=2(\min\{j,k\}-\frac{jk}{n})\ge 0.
\end{align*}
One one hand, let $f_{jk}=\min\{j,k\}-\frac{jk}{n}$, $F=[f_{jk}]$, then
for any $1\le j\le n-1$, we have
\begin{align*}
\sum_{k=1}^{n-1}f_{jk}=\sum_{k=1}^{n-1}\min\{j,k\}-\frac{jk}{n}=\frac{j(n-j)}{2}\le \frac{n^2}{8},
\end{align*}
and hence $\|F\|_{\infty}\le \frac{n^2}{8}$.
On other hand, notice that
\begin{align*}
\eta=\tr(Z^2)\le \sum_{i=1}^n\theta_i^2=f(d)=d^T F d\le \|F\|_2 d^T d\le (n-1)\|F\|_2 g^2.
\end{align*}
Consequently,
\begin{align*}
g\ge \sqrt{\frac{\eta}{{(n-1)\|F\|_2}}}\ge \sqrt{\frac{\eta}{{(n-1)\|F\|_{\infty}}}}\ge \sqrt{\frac{8\eta}{(n-1)n^2}}.
\end{align*}
This completes the proof.
\qquad\end{proof}

From the above theorem, we know that $g$ will not be small if $\tr(Z^2)$ is not small and $\tr(Z)=0$.
The next theorem shows how to determine a $Z$ such that  $\tr(Z^2)$ is maximized.
\begin{theorem}\label{thm4}
Let $I_n, Z_1,\dots, Z_{\ell}$ be an orthonormal basis of $\nad$.
 Then for any
$Z=\sum_{j=1}^{\ell}\alpha_j Z_j$, it holds that $\tr(Z)=0$, and
$\tr(Z^2)$ is maximized if $\alpha=[\alpha_1\,\dots\,\alpha_{\ell}]^T$ is the eigenvector of $H$ corresponding with its largest eigenvalue,
where $H=[h_{jk}]$ with $h_{jk}=\tr (Z_j Z_k)$.
\end{theorem}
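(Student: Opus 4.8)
The plan is to reduce the maximization of $\tr(Z^2)$ to a Rayleigh-quotient problem for the symmetric matrix $H$. First I would use orthonormality against $I_n$: since $I_n,Z_1,\dots,Z_\ell$ are orthonormal with respect to the inner product $(A,B)=\tr(A^TB)$, each $Z_j$ satisfies $\tr(Z_j)=(Z_j,I_n)=0$; hence for every choice of $\alpha$,
\[
\tr(Z)=\sum_{j=1}^{\ell}\alpha_j\tr(Z_j)=0,
\]
which is the first assertion (and, incidentally, makes Theorem~\ref{thm3} applicable to any such $Z$).

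Next I would expand the objective. Writing $Z=\sum_{j=1}^{\ell}\alpha_jZ_j$ and using linearity of the trace,
\[
\tr(Z^2)=\sum_{j=1}^{\ell}\sum_{k=1}^{\ell}\alpha_j\alpha_k\,\tr(Z_jZ_k)=\alpha^T H\alpha,
\]
where $H=[h_{jk}]$ with $h_{jk}=\tr(Z_jZ_k)$; note that $H$ is symmetric because $\tr(Z_jZ_k)=\tr(Z_kZ_j)$. At the same time, orthonormality of $\{Z_1,\dots,Z_\ell\}$ gives $\|Z\|_F^2=\sum_{j,k}\alpha_j\alpha_k(Z_j,Z_k)=\|\alpha\|_2^2$, so requiring $\|Z\|_F=1$ is equivalent to $\|\alpha\|_2=1$. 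The conclusion then follows from the Courant--Fischer / Rayleigh-quotient characterization: over the unit sphere $\|\alpha\|_2=1$, the quadratic form $\alpha^TH\alpha$ attains its maximum value $\lambda_{\max}(H)$ precisely when $\alpha$ is a unit eigenvector of $H$ associated with its largest eigenvalue (equivalently, via the Lagrange condition $H\alpha=\mu\alpha$, one gets $\alpha^TH\alpha=\mu$, which is largest for $\mu=\lambda_{\max}(H)$).

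The only real subtlety, which I would make explicit, is that $\tr(Z^2)=\alpha^TH\alpha$ is homogeneous of degree two in $\alpha$ and hence unbounded without a normalization; the statement must be understood as maximizing $\tr(Z^2)$ among $Z\in\nad$ of a fixed Frobenius norm, say $\|Z\|_F=1$, i.e.\ $\|\alpha\|_2=1$. One should also note that the parametrization $Z=\sum_{j=1}^{\ell}\alpha_jZ_j$ deliberately omits the $I_n$ component of $\nad$: adding any multiple of $I_n$ would merely shift the spectrum of $Z$ and destroy the property $\tr(Z)=0$ needed to invoke Theorem~\ref{thm3}. Once this normalization is fixed, no computational obstacle remains and the argument above is essentially immediate.
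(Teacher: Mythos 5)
Your proof is correct and follows essentially the same route as the paper's: $\tr(Z)=0$ from orthogonality of each $Z_j$ to $I_n$, the expansion $\tr(Z^2)=\alpha^TH\alpha$ with $H$ symmetric, and the Rayleigh-quotient conclusion. Your explicit remark that the maximization only makes sense under the normalization $\|Z\|_F=\|\alpha\|_2=1$ is a useful clarification that the paper leaves implicit (it surfaces only in Line~3 of Algorithm~\ref{alg:conservative1}, where $\|\alpha\|_2=1$ is imposed).
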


\begin{proof}
First, $\tr(Z)=0$ follows from the fact that $\tr(Z_j)=(I_n,Z_j)=0$ for $j=1,\dots,\ell$.
Second, simple calculation gives
\begin{align*}
\tr(Z^2)=\sum_{j, k}\alpha_j\alpha_k h_{jk}
=\alpha^T H\alpha.
\end{align*}
Noticing that $H$ is symmetric, the conclusion follows.
\end{proof}

Summarizing the above discussions on the ``conservative'' way gives rise to 
the conservative algorithm for the {\sc geanogjbd} problem.
First, we illustrate how to perform one step of the algorithm by function {\sc geanogjbd1step} in algorithm~\ref{alg:conservative1}.

\begin{algorithm}
  \caption{One step of the \textsc{geanogjbd-consv} 
    \label{alg:conservative1}}
  \begin{algorithmic}[1]
   \Function{$[\tau_n,W,f]=$geanogjbd1step}{$\mathcal{A}$,\;$\gamma$}
  \Statex
\State Compute $Z_1,\dots,Z_{\ell}$ such that $Z_1,\dots,Z_{\ell}$ together with $I_n$ form an orthonormal basis of $\nad$;
\State Compute the matrix $H=[h_{jk}]$ with $h_{jk}=\tr (Z_j Z_k)$;
\State Compute $\alpha=[\alpha_1\,\dots\, \alpha_{\ell}]^T$ with $\|\alpha\|_2=1$, where $\alpha$ is the eigenvector of $H$ corresponding with its largest eigenvalue;
\State Set $Z=\sum_{j=1}^{\ell}\alpha_j Z_j$;
\State
Compute the real Schur decomposition of $Z=QTQ^T$,
where the real parts of the eigenvalues $\lambda_1,\dots,\lambda_n$ of $T$ are sorted in an ascending order;
\State
Find the index $n_1= \argmax_{1\le i\le n-1}(\Re(\lambda_{i+1})-\Re(\lambda_{i}))$, set $n_2=n-n_1$,
$\tau_n=(n_1,n_2)$;
\State
Compute the eigenvalue decomposition of $T$ corresponding with $\tau_n$, $T=W\diag(T_1,T_2)W^{-1}$;

\State Compute the `economic' QR factorizations of $W(:,1:n_1)$ and $W(:,n_1+1,n_1+n_2)$, i.e., $W(:,1:n_1)=U_1R_1$, $W(:,n_1+1:n_1+n_2)=U_2R_2$,
where for $j=1,2$ $U_j\in\mathbb{R}^{n\times n_j}$, $R_j\in\mathbb{R}^{n_j\times n_j}$;

\State
Compute $W=Q[U_1\, U_2]$ and $f=\sum_{i=1}^m\|\OffBdiag_{\tau_n}(W^T A_i W)\|_F^2$.
\EndFunction
  \end{algorithmic}
\end{algorithm}

Several remarks follow in order.
\begin{remark}
\begin{enumerate}
\item Line 1, the input are
 the matrix set $\mathcal{A}=\{A_i\}_{i=1}^m$ and a parameter $\gamma$,
which is used to control the
approximate null space $\nad$;
the output are a partition $\tau_n=(n_1,n_2)$, a matrix $W\in\mathbb{R}^{n\times n}$ and $f=f(\tau_n,W)$.
\item Line 2, an orthonormal basis of $\nad$ can be obtained in the same way as in Steps 1 and 2 of algorithm~\ref{alg:greedy}. Then $Z_1,\dots,Z_{\ell}$ can be obtained via modified Gram-Schmidt process. 
\item Line 3 to 5, determine a $Z\in\nad$ with $\tr(Z^2)$ maximized (Theorem~\ref{thm4}). 
\end{enumerate}
\end{remark}

\bigskip

Next, we are ready to present  the \textsc{geanogjbd-consv} algorithm in algorithm~\ref{alg:conservative}.

\begin{algorithm}
  \caption{\textsc{geanogjbd-consv}
    \label{alg:conservative}}
  \begin{algorithmic}[1]
   \Require{The matrix set $\mathcal{A}=\{A_i\}_{i=1}^m$, the tolerance $\epsilon$ and a parameter $\gamma$ used to control the approximate null space $\nad$. }
   \Ensure{A solution $(\tau_n,W)$ to the {\sc geanogjbd} problem and $f=f(\tau_n,W)$. }
  \Statex

\State Set $\tau_n=(n)$, $W=I_n$, $f=0$;
\State Call $[\phi,W_d,v_f]=\text{\sc geanogjbd1step}(\{A_i\}_{i=1}^m, \; \gamma)$;
\State Set $\ell=1$, $p=1$, $q=n$, $\tau=\phi(1)$, $\hat{\tau}_n=\phi$, $\widehat{W}=W_d$, $\hat{f}=v_f$;
\While{$\hat{f}\le \epsilon^2$}
\State $\tau_n=\hat{\tau}_n, W=\widehat{W}, f=\hat{f}$;
\State $W_1=W(:, p:p+\tau(\ell)-1)$, $W_2=W(:, p+\tau(\ell):q)$;
\State For $i=1,\dots,m$, compute $A_{i1}=W_1^TA_iW_1$, $A_{i2}=W_2^TA_iW_2$;
\State For $j=1,2$, call $[\phi_j,W_{jj},f_j]=\text{\sc geanogjbd1step}(\{A_{ij}\}_{i=1}^m,\; \gamma)$;
\State Set $W_d(p:q,p:q)=\diag(W_{11}, W_{22})$, $\tau(\ell)=(\phi_1(1),\phi_2(1))$, $v_f(\ell)=[f_1 \; f_2]$;
\State $\ell=\argmin_j v_f(j)$;
\State $p=\sum_{j=1}^{\ell-1}\tau_n(j)+1$, $q=p+\tau_n(\ell)-1$;
\State $\hat{\tau}_n(\ell)=(\tau(\ell), \tau_n(\ell)-\tau(\ell))$;
\State $\widehat{W}(:, p:q)=W(:, p:q)W_d(p:q, p:q)$;
\State Compute $\hat{f}=\sum_{i=1}^m\|\OffBdiag_{\hat{\tau}_n}(\widehat{W}^T A_i \widehat{W})\|_F^2$.
\EndWhile

  \end{algorithmic}
\end{algorithm}

Several remarks follow in order.
\begin{remark}
\begin{enumerate}
\item Let $(\tau_n, W, f)$ be the current guess of the solution,
where $\tau_n=(n_1,\dots,n_s)$, $W=[W_1\dots W_s]$ with
$W_j\in\mathbb{R}^{n\times n_j}$ for $j=1,\dots, s$, and $f=f(\tau_n,W)$.
For each $1\le j\le s$, the output of the function {\sc geanogjbd1step} with input $\{W_j^TA_iW_j\}_{i=1}^m$ is $(\phi_j,W_{jj},f_j)$.
Then $\tau$ stores the first elements of $\phi_j$'s in a vector,
$W_d$ stores $W_{jj}$'s in a block diagonal matrix,
$v_f$ stores $f_j$'s in a vector.
To be specific, $\tau=(\phi_1(1),\dots,\phi_s(1))$,
$W_d=\diag(W_{11},\dots,W_{ss})$,
$v_f=[f_1\,\dots\,f_s]$.
The integer $\ell$ stores the index $\argmin_jv_f(j)$,
integers $p$ and $q$ stores the first and last row indices of the $\ell$th diagonal block of $W_d$, respectively.
The triple $(\hat{\tau}_n, \widehat{W}, \hat{f})$ stores the next guess of the solution.

\item Notice that the solution $(\tau_n,W)$ returned by {\sc geanogjbd-consv} will satisfy
$f(\tau_n,W)\le \epsilon^2$, unlike {\sc geanogjbd-greedy}.
\end{enumerate}
\end{remark}

\section{Numerical Experiments}\label{sec:numer}
Now we present several numerical examples to illustrate the performance of our methods. 
All the numerical examples were carried out using MATLAB R2014b, 
with machine $\epsilon=2.2\times 10^{-16}$.
We compare the performance of our algorithms with four other algorithms for the {\sc geanojbd} problem, namely,  JBD-OG, JBD-ORG \cite{ghennioui2010gradient}, 
JBD-LM \cite{cherrak2013non} and JBD-NCG \cite{nion2011tensor}.
For the JBD-OG method and  the JBD-ORG method
the stopping criteria are 
$\|W_{k+1}-W_k\|_F<10^{-12}$, or $\dfrac{\phi_k - \phi_{k+1}}{\phi_k}<10^{-8}$ for successive 5 steps, 
or the maximum number of iterations,  which is set as 2000, exceeded.
Here $W_k$, $\phi_k$ are the $W$ matrix and the value of the cost function in $k$th step, respectively. 
For the JBD-LM method,
the stopping criteria are the same as that of the JBD-OG/ORG method,
except the maximum number of iterations is set as 200. 
And in order to avoid degenerate solutions, we use \eqref{wtau} to normalize $W_k$ in each step.
For the JBD-NCG method,
the stopping criteria are $\phi_k-\phi_{k+1}<10^{-8}$ 
or $\frac{\phi_k - \phi_{k+1}}{\phi_k}<10^{-8}$, 
or the maximum number of iterations, which is set as 2000, exceeded.
In all four iterative algorithms above, 20 initial values (19 random initial values and an EVD-based initial value \cite{nion2011tensor}) are used to iterate 20 steps first, 
and then the iteration which produces the smallest value of the cost function proceeds until one of the stopping criteria is satisfied.

\subsection{Random data}\label{subsec:random data}
Let $\tau_n=(n_1, \dots, n_t)\in\tn$, we use the model in \cite{ghennioui2010gradient} to generate the matrix set $\{A_i\}_{i=1}^m$:
\begin{align}\label{modelVDV}
A_i=V^T D_i V, \quad i=1,\dots,m,
\end{align}
where $V$, $D_i$'s are, respectively, the mixing matrix and the approximate $\tau_n$-block diagonal matrices.  The elements in $V$ and $\Bdiag_{\tau_n}(D_i)$ are all real numbers drawn from a standard normal distribution, while the elements in $\OffBdiag_{\tau_n}(D_i)$ are all real numbers drawn from a normal distribution with mean zero and variance $\sigma^2$. The signal-to-noise ratio is defined as $\SNR=10\log_{10}(1/\sigma^2)$. 
In the {\sc geanogjbd-consv} algorithm, the parameter $\epsilon$ is set as 
$\epsilon=3n^2 10^{-\SNR/20}$.

For model \eqref{modelVDV}, we define the same performance index defined in \cite{cai2015matrix} to measure the quality of the computed solution $W$:
\begin{align}\label{pi}
\PI(V^{-1}, W)=\min_\pi\max_{1\leq i\leq t}\subspace(V_i, W_{\pi(i)}),
\end{align}
where $V^{-1}=[V_1 \, \dots \, V_t], \; W=[W_1 \, \dots \, W_t], \; V_i, \; W_{\pi(i)}\in\mathbb{R}^{n\times n_i}, \; i=1, \dots, t$, the vector $\pi=(\pi(1), \dots, \pi(t))$ is a permutation of $\{1, \dots, t\}$ satisfying $(n_{\pi(1)}, \dots, n_{\pi(t)})=\tau_n$, and the expression $\subspace(E, F)$ denotes the angle between two subspaces specified by the column vectors of $E$ and $F$, which can be computed by the MATLAB function "subspace". The smaller the performance index is, the better $W$ is.
In order to make a fair comparison of the algorithms,
the matrices $W$ returned by different algorithms are normalized to satisfy \eqref{wtau}.

Let $(\hat{\tau}_n, \widehat{W})$ be a solution returned by one of our algorithms
with $\hat{\tau}_n=(\hat{n}_1,\dots,\hat{n}_{\hat{t}})$.
The partition $\hat{\tau}_n$ returned by our algorithms may be not equivalent to $\tau_n$ in \eqref{modelVDV}, especially when the SNR is small.
But we still say that $\hat{\tau}_n$ is correct  if $\card(\hat{\tau}_n)\ge \card(\tau_n)$ and there exists
a $(0, 1)$ matrix $N$ such that $\tau_n=\hat{\tau}_n N$.
Notice that such $(0,1)$ matrix $N$ is not unique.
Once $N$ is fixed, the columns of $\widehat{W}$ need to be reordered accordingly.
Denote the resulting matrix as $\widetilde{W}_{N}$,  the performance index  $\PI(V^{-1},\widetilde{W}_N)$ can be defined as in \eqref{pi}.
Then the performance index of $(\hat{\tau}_n, \widehat{W})$ with $\hat{\tau}_n$ being correct, can be defined as $\min_{N}\PI(V^{-1}, \widetilde{W}_N)$.
For example,   let $\tau_n=(1,2,3)$, $\hat{\tau}_n=(2,4)$ or $\hat{\tau}_n=(1,1,2,2)$.
In the former  case, $\hat{\tau}_n$ is not correct, we say that our algorithm fails.
In the latter case, $\hat{\tau}_n$ is correct.
The $(0,1)$ matrix can be one of the following matrices:
\begin{align*}
N_1=\bsmat
1 & 0 & 0 \\
0 & 0 & 1\\
0 & 1 & 0\\
0 & 0 & 1
\esmat,
\quad
N_2=\bsmat
0 & 0 & 1 \\
1 & 0 & 0\\
0 & 1 & 0\\
0 & 0 & 1
\esmat,
\quad
N_3=\bsmat
1 & 0 & 0 \\
0 & 0 & 1\\
0 & 0 & 1\\
0 & 1 & 0
\esmat,
\quad
N_4=\bsmat
0 & 0 & 1 \\
1 & 0 & 0\\
0 & 0 & 1\\
0 & 1 & 0
\esmat,
\end{align*}
and the corresponding $\widetilde{W}_N$ can be given by
\begin{align*}
\widetilde{W}_{N_1}=\widetilde{W}( :, [1\, 3 \, 4 \, 2 \, 5 \, 6]),\quad
\widetilde{W}_{N_2}=\widetilde{W}( :, [2\, 3 \, 4 \, 1 \, 5 \, 6]),\\
\widetilde{W}_{N_3}=\widetilde{W}( :, [1\, 5 \, 6 \, 2 \, 3 \, 4]),\quad
\widetilde{W}_{N_4}=\widetilde{W}( :, [2\, 5 \, 6 \, 1 \, 3 \, 4]).
\end{align*}
Then the performance index for $(\hat{\tau}_n,\widehat{W})$
is $\min_{i=1,2,3,4}\PI(V^{-1},\widetilde{W}_{N_i})$.

We generate the matrix sets by model \eqref{modelVDV} with the following parameters:

\textbf{Case 1.} Let $n=9, \tau_n=(3,3,3), m=20$.

\textbf{Case 2.} Let $n=10, \tau_n=(1,2,3,4), m=20$.

For different SNRs, we compare our algorithms with the above four algorithms 
in terms of performance index.
And for each SNR, we perform 50 independent trials. 
The box plot (generated by MATLAB function ``boxplot'') of the results are displayed in Figure~\ref{fig:case1} and \ref{fig:case2}.

\begin{figure}[!hbt]
\centering
\includegraphics[width=\textwidth]{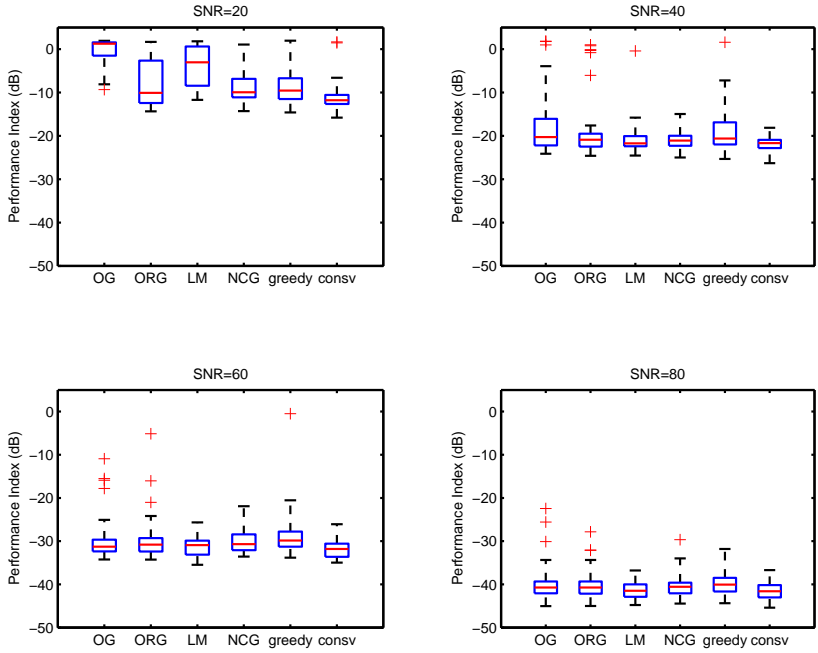}
\caption{Performance of different algorithms: Case 1}
\label{fig:case1}
\end{figure}

\begin{figure}[!hbt]
\centering
\includegraphics[width=\textwidth]{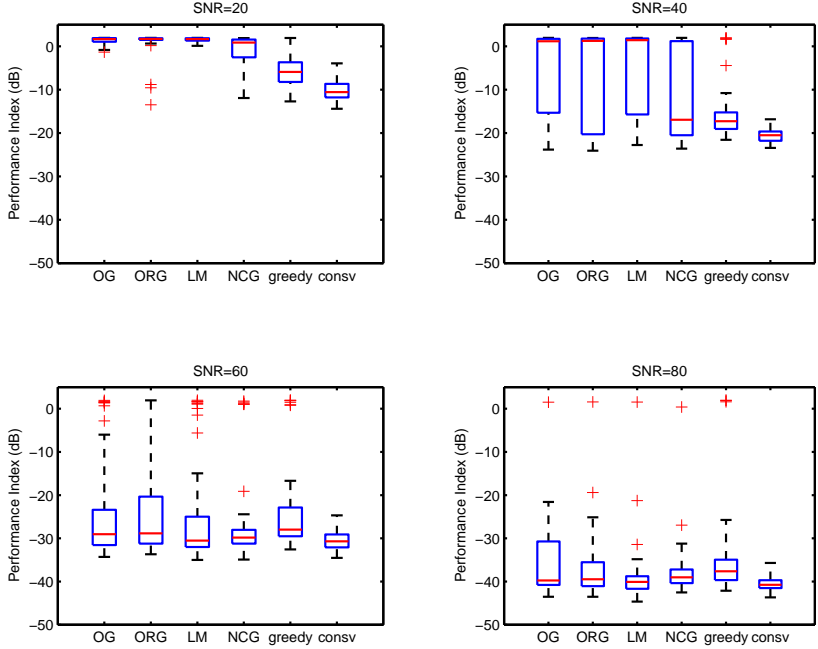}
\caption{Performance of different algorithms: Case 2}
\label{fig:case2}
\end{figure}

In case 1, the sizes of diagonal blocks are the same.
We can see from Figure~\ref{fig:case1} that when SNR equals to 40, 60 or 80,
the performance indices produced by all six algorithms are almost the same on average, but the {\sc geanogjbd-consv} method is more robust;
when SNR equals to 20, the performance indices produced by the JBD-ORG method, the JBD-NCG method and our two methods are almost the same, and smaller than 
those of the other two methods.
In case 2, the sizes of diagonal blocks are different.
We can see from Figure~\ref{fig:case2} that when SNR equals to 60 or 80,
the performance indices produced by all six algorithms are almost the same on average, and the {\sc geanogjbd-consv} method is obviously more robust than the other five methods;
when SNR equals to 20 or 40, the {\sc geanogjbd-consv} method produces the smallest performance index.

\subsection{Separation of convolutive mixtures of source}
We consider example 5.2 in \cite{cai2015matrix},
all settings are kept the same except that 
\begin{enumerate}
\item
The source signals are mixed according to the transfer function matrix given by:
\begin{align*}
H[z]=& \bsmat
0.59 & -0.67 & 0.89 \\
0.42 & 0.42 & -0.92 \\
0.41 & -0.91 & -0.34 \\
0.14 & -0.44 & 0.86 \\
0.04 & -0.73 & -0.95 \\
-0.86 & -0.04 & 0.41
\esmat+
\bsmat
-0.89 & 0.80 & -0.78 \\
0.65 & -0.25 & 0.58 \\
0.99 & 0.19 & 0.48 \\
-0.75 & -0.85 & 0.99 \\
-0.70 & -0.76 & 0.81 \\
0.32 & -0.23 & -0.19
\esmat z^{-1}\\
& +\bsmat
0.40 & -0.12 & -0.51 \\
-0.55 & 0.71 & 0.88 \\
-0.28 & -0.71 & -0.41 \\
0.64 & 0.93 & 0.63 \\
0.43 & -0.70 & -0.98 \\
0.72 & 0.53 & -0.03
\esmat z^{-2}+
\bsmat
-0.07 & 0.69 & -0.05 \\
0.45 & 0.56 & -0.94 \\
0.24 & -0.29 & 0.42 \\
-0.02 & -0.21 & -0.89 \\
-0.03 & 0.63 & -0.21 \\
0.06 & -0.73 & -0.55
\esmat z^{-3},
\end{align*}
where $H[z]$ stands for the $z$ transform of $H(t)$;

\item 
The {\sc geanogjbd} problem for the correlation matrix set $\{R_Y(t,\tau)\}_{\tau=0}^2$
is considered rather than the {\sc heanogjbd} problem for $\{\frac{R_Y(t,\tau)+R_Y(t,\tau)^T}{2}\}_{\tau=0}^2$;
\item
After the source signal vectors is recovered by $\hat{S}(t)=B^TX(t)$,
where $B$ is a solution to the {\sc geanogjbd} problem, $X(t)$ is the observed signal vector, a blind SIMO system identification step via the subspace-based technique proposed in \cite{moulines1995subspace} is applied to obtain  the primary sources signals $\hat{s}_1(t), \hat{s}_2(t), \hat{s}_3(t)$;
\end{enumerate}

The maximum correlation between the $i$th source signal $s_i(t)$ and the recovered signals $\hat{s}_1(t), \hat{s}_2(t), \hat{s}_3(t)$, i.e., $\max_{j=1,2,3}$ correlation $(s_i(t),\hat{s}_j(t))$,
is used to  estimate the quality of the $i$th recovered signal.
The larger the correlation is, the better the source signal is recovered.

In Figure~\ref{fig:signal}, we plot the correlations between the source signals and the extracted signals obtained from computed solutions by different algorithms for $s_1(t), s_2(t), s_3(t)$, respectively. 
All displayed results have been averaged over 50 independent trials,
and in the {\sc geanogjdb-consv} method, we set $\epsilon=0.05$ for all SNRs.

\begin{figure}[!hbt]
\centering
\includegraphics[width=0.45\textwidth]{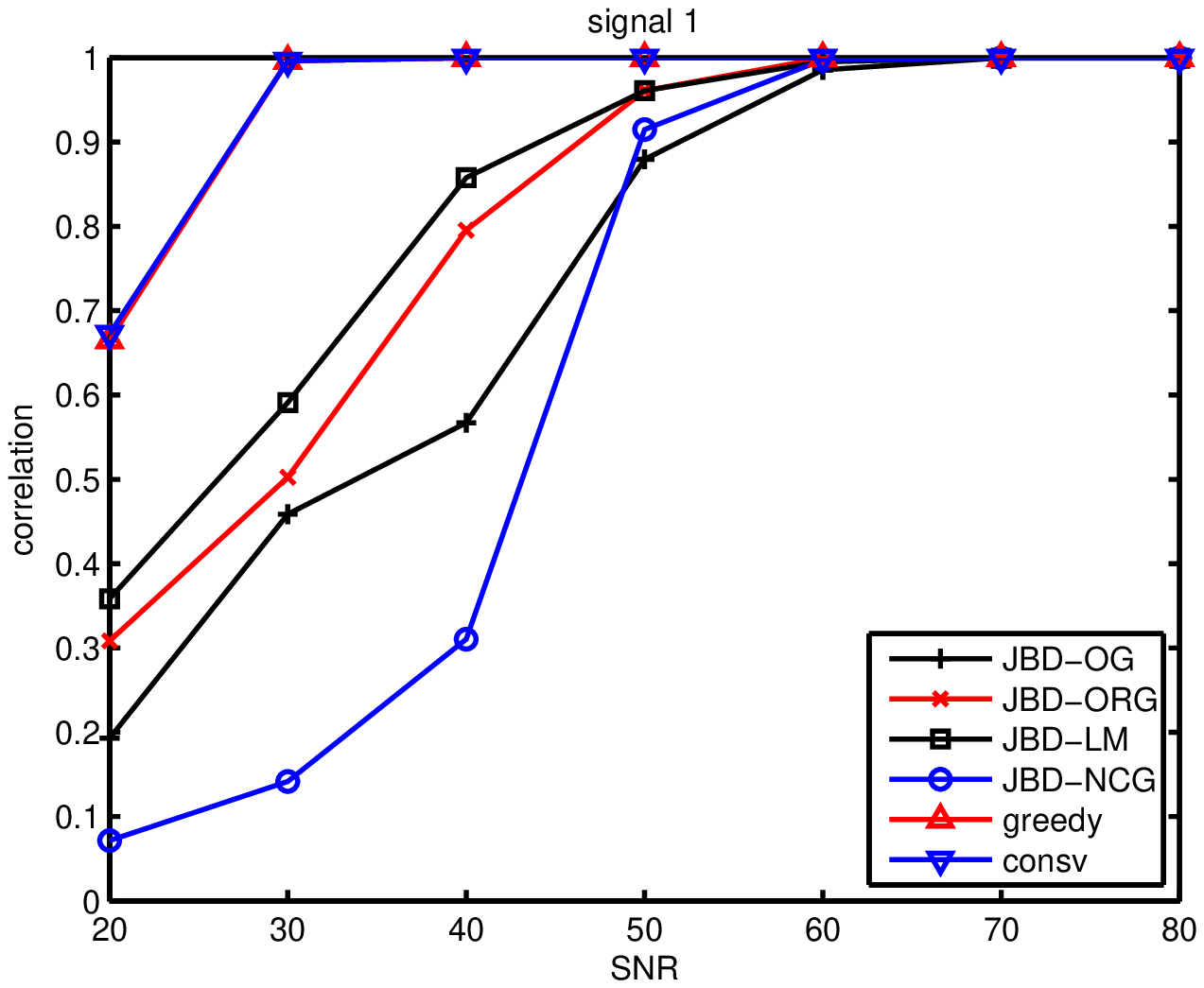}
\includegraphics[width=0.45\textwidth]{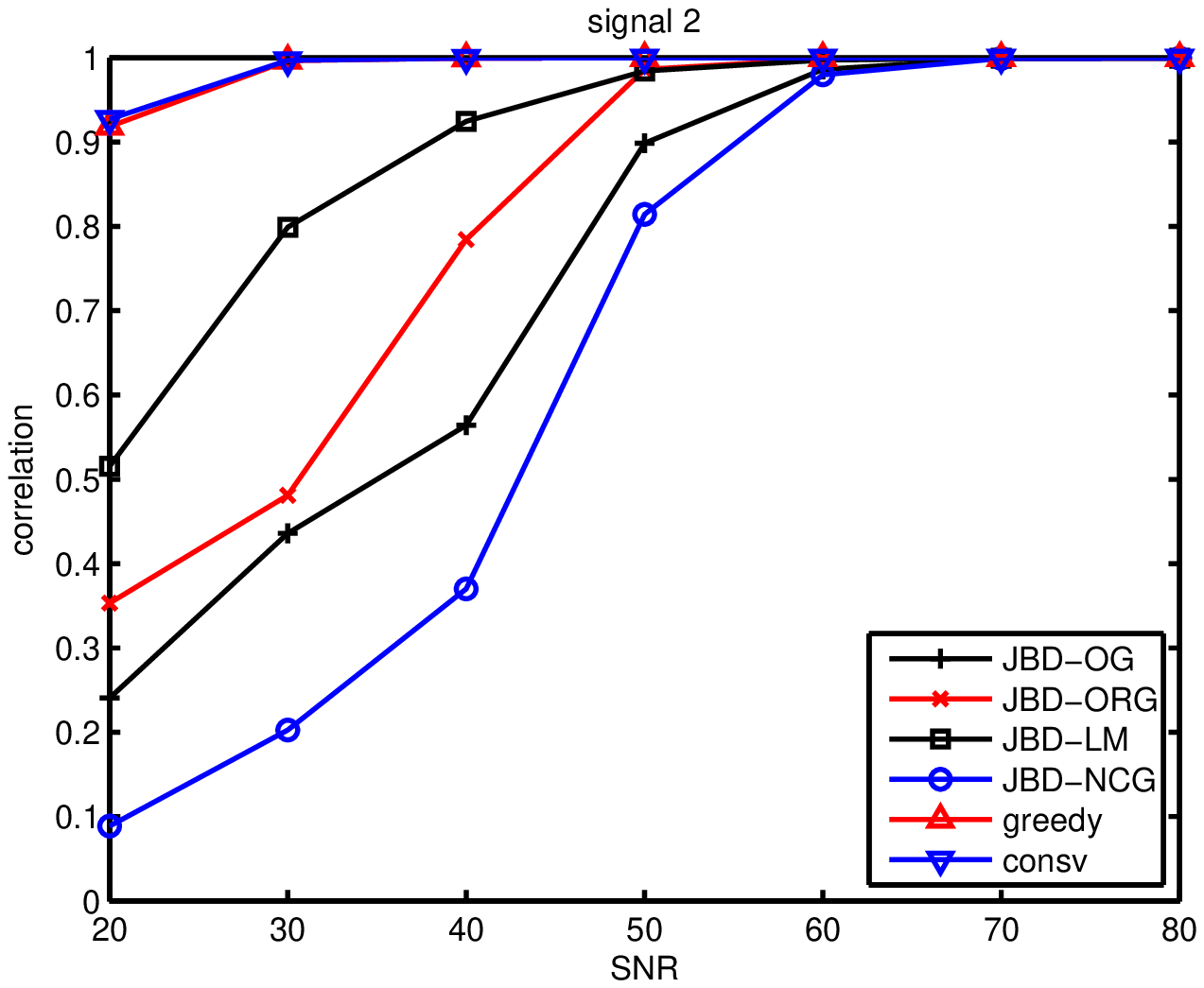}\\
\includegraphics[width=0.45\textwidth]{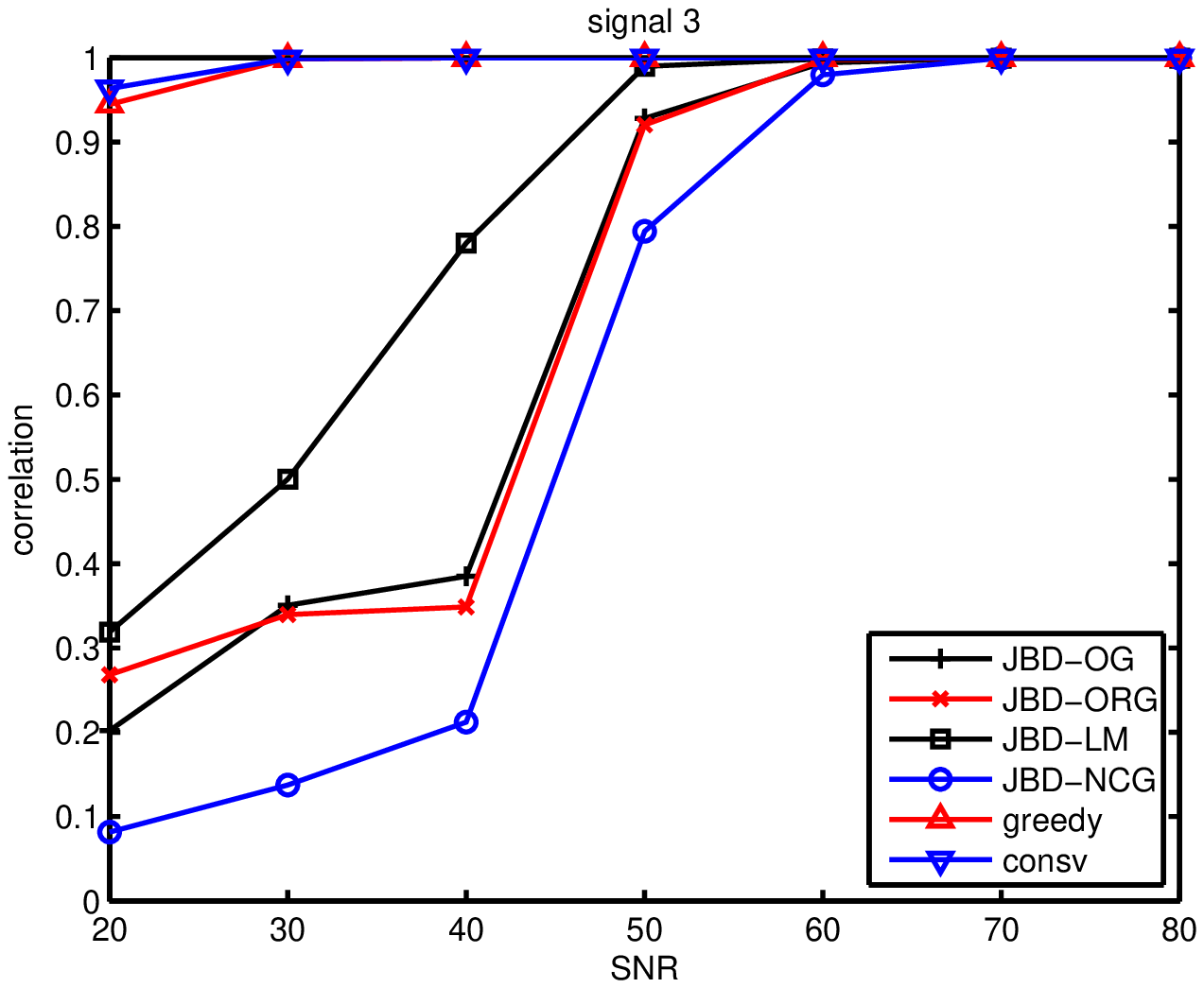}
\caption{ Correlation between recovered signals and source signals}
\label{fig:signal}
\end{figure}

It can be seen from Figure~\ref{fig:signal} that when SNR is more than 50,
the recovered signals obtained from all six algorithms are at the same level of quality, and when SNR is smaller than 40, our algorithms give much better result.

\section{Conclusion}\label{sec:conclusion}
In this paper, we show that the solution to the {\sc geenogjbd} problem or the {\sc geanogjbd} problem can be obtained by finding a proper $Z$ in $\na$ or $\nad$ followed by computing an eigenvalue decomposition of $Z$.
A necessary and sufficient condition for the equivalence of all solutions to the {\sc geenogjbd} problem is established.
Based on the established theory, two algorithms are proposed to solve the {\sc geanogjbd} problem. The first algorithm, which uses a greedy strategy,
is simple and efficient, but may suffer from instability.
The second algorithm, which uses a conservative strategy,
is an iterative method, will terminate within finite steps,
and is much more stable than the first algorithm.
Our limited numerical experiments show that the {\sc geanogjbd-consv} method outperforms the current iterative algorithms based on optimization, especially
when the SNR is small.

It is also worth mentioning here that the necessary and sufficient condition for the equivalence of all solutions to the {\sc geenogjbd} problem can be used to analyze
the sensitivity of the {\sc geenojbd} problem, and we will present the results in a separate paper.
The approach we treat the {\sc geenogjbd}/{\sc geanogjbd} problem
 in this paper can also be used to deal with the {\sc geenugjbd}/{\sc geanugjbd} problem. 

Finally, noticing that, 
compared with numerical methods for the BTD of tensors (see e.g., \cite{de2008decompositions3,tensorlab}), 
the {\sc gjbd} problem present in this paper is limited in several aspects:
the matrices $A_i$'s are square rather than general non-square ones;
the matrices $A_i$'s are factorized via a congruence transformation: 
$A_i=W^{-T}\diag(A_i^{(jj)})W^{-1}$ (see \eqref{eq:nojbd}),
rather than a general factorization: $A_i=U D_i V^T$, 
where $D_i$'s are block diagonal, $U$, $V$ are not necessarily square.
A natural question is that can we adopt a similar algebraic approach in this paper to 
remove these limitations?
To that end, more work are obviously needed, and our initial results seem inspiring.

\noindent{\bf Acknowledgement. } The authors wish to give thanks to 
Dr. O.~Cherrak and Dr. D.~Nion for sharing their Matlab codes of 
the JBD-LM method and the JBD-NCG algorithm, respectively.
The authors also want to give thanks to the anonymous referees 
for their comments and suggestions, which help us improve the paper.

\bibliographystyle{abbrv} 

\end{document}